\newcommand{\doublewidealign}[3]{\begin{minipage}[t]{#1}\begin{align*}#2\end{
align*}\end{minipage}\begin{minipage}[t]{2.5in}\begin{align*}#3\end{align*}\end{
minipage}}
\newcommand{\abs}[1]{\left| #1 \right|}
\newcommand{\parens}[1]{\left( #1 \right)}
\newcommand{\varep}{\varepsilon}
\newcommand{\R}{\mathbb{R}}
\newcommand{\RP}{\mathbb{RP}}
\newcommand{\tr}{{\rm tr}}
\newcommand{\C}{\mathbb{C}}
\newcommand{\fund}[2][]{\pi_1^{#1}(#2)}
\newcommand{\slc}[1][\C]{{\rm PSL}_2(#1)}
\newcommand{\slpm}[2][]{{\rm SL}^{\pm}_{#1}(#2)}
\newcommand{\gl}{{\rm GL}}
\newcommand{\liesl}[1][4]{\mathfrak{sl}(#1)}
\newcommand{\lieslact}[1]{\left.\liesl\right._{#1}}
\newcommand{\lieso}[1][3]{\mathfrak{so}(#1,1)}
\newcommand{\liesoact}[1]{\left.\lieso\right._{#1}}
\newcommand{\homol}[3][1]{H^{#1}(#2,#3)}
\newcommand{\vact}[1]{\mathfrak{v}_{#1}}
\newcommand{\pgl}{\rm{PGL}_{4}(\R)}
\newcommand{\PGL}[2][]{\rm{PGL}_{#1}(#2)}
\newcommand{\so}[1]{{\rm SO}(#1,1)}
\newcommand{\rvarpar}[2][\pgl]{\mathcal{R}(#2,#1)_{\rm{p}}}
\newcommand{\rvar}[2][\pgl]{\mathcal{R}(#2,#1)}
\newcommand{\charvar}[2][\pgl]{\mathcal{X}(#2,#1)}
\newcommand{\charvarpar}[2][\pgl]{\mathcal{X}(#2,#1)_{\rm{p}}}
\newcommand{\defspace}[2][]{\mathcal{D}_{#1}(#2)}
\newcommand{\Z}{\mathbb{Z}}
\newcommand{\HH}{\mathbb{H}}
\newcommand{\Ad}[1]{{\rm Ad}(#1)}
\newcommand{\dhilb}[2]{d_{{\rm H}}(#1,#2)}
\newcommand{\bs}{\backslash}
\newcommand{\be}{\begin{enumerate}}
\newcommand{\ee}{\end{enumerate}}
\newcommand{\bd}{\begin{description}}
\newcommand{\ed}{\end{description}}
\newcommand{\Om}{\Omega}
\newcommand{\ol}[1]{\overline{#1}}
\newcommand{\om}{\omega}
\newcommand{\rhogeo}{\rho_{{\rm geo}}}
\newtheorem{theorem}{Theorem}
\newtheorem{proposition}[theorem]{Proposition}
\newtheorem{corollary}[theorem]{Corollary}
\newtheorem{lemma}[theorem]{Lemma}
\newtheorem{remark}[theorem]{Remark}
\numberwithin{equation}{section}
\numberwithin{theorem}{section}
\newtheorem{question}{Question}
\begin{document}

\title{Deformations of Non-Compact, Projective Manifolds}
\author{Samuel A. Ballas}
\maketitle
\begin{abstract}
 In this paper, we demonstrate that the complete hyperbolic structure of various two-bridge knots and links cannot be deformed to an inequivalent strictly convex projective structure. We also prove a complementary result showing that under certain rigidity hypotheses, branched covers of amphicheiral knots admit non-trivial, strictly convex deformations near their complete hyperbolic structure.
\end{abstract}

\section{Introduction}\label{intro}

Mostow rigidity for hyperbolic manifolds is a crucial tool for understanding the deformation theory of lattices in $\rm{Isom}(\HH^n)$. Specifically, it tells us that the fundamental groups of finite volume hyperbolic manifolds of dimension $n\geq3$ admit a unique conjugacy class of discrete, faithful representations into $\rm{Isom}(\HH^n)$.  

Recent work of \cite{CooperLongTillman11, Benoist04I, Benoist05, Benoist06} has revealed several parallels between the geometry of hyperbolic $n$-space and the geometry of strictly convex domains in $\RP^n$. For example, the classification and interaction of isometries of strictly convex domains is analogous to the situation in hyperbolic geometry. Additionally, if the isometry group of the domain is sufficiently large then the strictly convex domain equipped with the Hilbert metric is known to be $\delta$-hyperbolic.  Despite the many parallels between these two types of geometry, there is no analogue of Mostow rigidity for strictly convex domains. This observation prompts the following question: when is it possible to non-trivially deform the complete hyperbolic structure on a finite volume hyperbolic manifold inside the category of strictly convex projective structures? 

Currently, the answer is known only in certain special cases. For example, when the manifold contains a totally geodesic hypersurface there exist non-trivial deformations at the level of representations coming from the bending construction of Johnson and Millson \cite{JohnsonMillson87}. In the closed case, work of Koszul \cite{Koszul68} shows that these new projective structures arising from bending remain properly convex. Further work of Benoist \cite{Benoist05} shows that these structures are actually strictly convex. In the non-compact case, recent work of Marquis \cite{Marquis10} has shown that the projective structures arising from bending remain properly convex in this setting as well.

 In contrast to the previous results, there are examples of closed $3$-manifolds for which no such deformations exist (see \cite{CooperLongThist07}). Additionally, there exist $3$-manifolds that contain no totally geodesic surfaces, that nevertheless admit deformations (see \cite{CooperLongThist06}). Following the terminology in \cite{CooperLongThist07} we refer to deformations that do not arise from the bending construction as \emph{flexing deformations}.
 
Prompted by these results, a natural question is whether or not there exist strictly convex flexing deformations for non-compact finite volume 3-manifolds. Two-bridge knots and links provide a good place to begin exploring because they have particularly simple presentations for their fundamental groups, which makes analyzing their representations a more tractable problem. Additionally, work of \cite{HatcherThurston85} has shown that they contain no closed totally geodesic embedded surfaces.

If $M$ is a manifold, the theory of $(G,X)$ structures tells us that each projective structure on $M$ gives rise to a conjugacy class of representations, $\rho:\fund{M}\to \PGL[n+1]{\R}$ called the \emph{holonomy} of the structure (see \cite{Goldman87} for details). In general, properties of the structure manifest themselves as properties of the holonomy representation. In particular, we will see that there are strong restrictions placed on the holonomy of a strictly convex projective structure.    

While Mostow rigidity guarantees the uniqueness of \emph{complete} structures on finite volume 3-manifolds, work of Thurston \cite[Chap.\ 5]{ThurstonNotes} shows that if we remove the completeness hypothesis then there is an interesting deformation theory for cusped, hyperbolic 3-manifolds. However, these incomplete hyperbolic structures are never strictly (or even properly) convex projective structures. For example, when the completion of the deformed structure corresponds to Dehn filling then the image of the developing map will miss a countable collection of geodesics. 

In order to obtain a complete hyperbolic structure we must insist that the holonomy of the peripheral subgroup be parabolic. In \cite{CooperLongTillman11}, a notion of parabolicity is introduced for automorphisms preserving a properly convex domain. In the strictly convex setting we show that parabolicity of peripheral subgroups is a necessary condition on the holonomy of a strictly convex projective structure (see Lemma \ref{scparabolic}).

Using normal form techniques developed in section \ref{normal forms} we are able to prove that for several two-bridge knot and link complements the holonomy of their complete hyperbolic structure is, locally and up to conjugacy, the only representation with parabolic peripheral holonomy. As a result we are able to prove the following theorem. 

\begin{theorem}\label{rigiditytheorem}
 Let $M$ be the complement in $S^3$ of $4_1$ (the figure-eight knot), $5_2$, $6_1$, or $5_1^2$ (the Whitehead link). Then $M$ does not admit strictly convex deformations of its complete hyperbolic structure\footnote{The notation for these knots and links comes from Rolfsen's table of knots and links \cite{Rolfsen90}}.
\end{theorem}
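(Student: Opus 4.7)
The plan is to reduce the statement to a concrete computation in the $\PGL[4]{\R}$-representation variety of each two-bridge link group, exploiting the consequence of Lemma \ref{scparabolic} that any strictly convex deformation must have parabolic peripheral holonomy. For a two-bridge knot or link the fundamental group admits a two-generator presentation $\langle a, b \mid r \rangle$ (two relations in the link case), where $a$ and $b$ are meridians. I will parametrize representations $\rho\colon\fund{M}\to\pgl$ near the discrete faithful representation $\rhogeo$ by specifying the images of $a$ and $b$ and then quotienting by conjugation.

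The first step is to use the normal forms developed in Section \ref{normal forms} to place $\rho(a)$ and $\rho(b)$ into a canonical form, using up the available conjugation freedom and thereby producing explicit coordinates on a slice of $\rvar{\fund{M}}$ near $[\rhogeo]$. Next I will impose that $\rho(a)$ is parabolic; by Lemma \ref{scparabolic}, this is forced on any strictly convex deformation, and in two-bridge links the meridians $a$ and $b$ are conjugate so this condition propagates to $\rho(b)$. This cuts down the slice to a subvariety of parabolic-meridian representations. The third step is to substitute these normal-form matrices into the relation(s) defining $\fund{M}$ and obtain a system of polynomial equations in finitely many unknowns whose simultaneous vanishing characterizes $[\rho]$ in a neighborhood of $[\rhogeo]$.

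The heart of the proof, and the step I expect to be the main obstacle, is then showing that this polynomial system has $\rhogeo$ as an isolated solution (up to conjugation) for each of the four specific links in the statement. Concretely, one must verify for $4_1$, $5_2$, $6_1$, and the Whitehead link $5_1^2$ that the ideal generated by the entries of $\rho(r) - I$ together with the parabolicity conditions on the peripheral elements is, near $\rhogeo$, cut down to a single (conjugacy class of) solution. For the figure-eight knot this is already plausible because the relation is short, but for $5_2$, $6_1$, and $5_1^2$ the relators are longer and the elimination becomes substantially more delicate; handling the Whitehead link additionally requires incorporating both link components and checking parabolicity of each. I anticipate that the computation will be carried out by hand with the aid of symbolic algebra, and that the key algebraic phenomenon that makes it succeed is that the parabolicity constraints provide just enough equations to rigidify the extra deformation directions that $\PGL[4]{\R}$ allows beyond $\PGL[2]{\C}$.

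Once the rigidity at the level of representations is established, the theorem follows immediately: any strictly convex projective deformation of the complete hyperbolic structure would, by Lemma \ref{scparabolic} and the holonomy principle, produce a nearby inequivalent representation with parabolic peripheral holonomy, contradicting the local uniqueness of $[\rhogeo]$ in the parabolic-peripheral subvariety.
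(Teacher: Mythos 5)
Your proposal is correct and follows essentially the same route as the paper: reduce via Lemma \ref{scparabolic} and the holonomy map (Corollary \ref{scdef}) to local rigidity relative to the boundary, use the normal forms of Section \ref{normal forms} to coordinatize the slice of parabolic-meridian representations, substitute into the two-bridge relator $AW=WB$, and verify by symbolic computation that $[\rhogeo]$ is the isolated solution of that system together with the parabolicity conditions on the rest of the peripheral subgroup. The one point worth stressing, which your phrase ``parabolicity conditions on the peripheral elements'' covers only implicitly, is that for the figure-eight knot the meridian conditions alone leave a genuine one-parameter family of solutions, and it is precisely the additional requirement that the \emph{longitude} be parabolic (imposed as $\tr(L)=4$) that collapses this family to the single point $t=4$; for the Whitehead link no such extra condition is needed, and the $5_2$ and $6_1$ cases are deferred to the computations in \cite{Ballas12}.
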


It should be mentioned that an infinitesimal analogue of this theorem is proven in \cite{PortiHeusener09} for the figure-eight knot and the Whitehead link. In light of Theorem \ref{rigiditytheorem} we ask the following:

\begin{question}\label{twobridgequestion} 
Does any hyperbolic two-bridge knot or link admit a strictly convex deformation of its complete hyperbolic structure? 
\end{question}

In \cite{PortiHeusener09} it is shown that there is a strong relationship between deformations of a cusped hyperbolic 3-manifold and deformations of surgeries on that manifold. In particular, the authors of \cite{PortiHeusener09} use the fact that the figure-eight knot is infinitesimally projectively rigid relative to the boundary to deduce that there are deformations of certain orbifold surgeries of the figure-eight knot. We are able to extend these results to other amphicheiral knot complements that enjoy a certain rigidity property in the following theorem. See sections \ref{localdef} and \ref{rigflex} for the relevant definitions.

\begin{theorem} \label{achiraldef}
 Let $M$ be the complement of a hyperbolic amphicheiral knot, and suppose that $M$ is infinitesimally projectively rigid relative to the boundary and the longitude is a rigid slope. Then for sufficiently large $n$, there is a one-dimensional family of strictly convex deformations of the complete hyperbolic structure on $M(n/0)$.
\end{theorem}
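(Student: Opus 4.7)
The plan is to follow and extend the approach of Porti and Heusener \cite{PortiHeusener09} for the figure-eight knot, replacing their case-specific computation with the two general rigidity hypotheses. The argument proceeds in three stages: analyzing the local structure of the character variety of $M$ near $\rhogeo$, using amphicheirality to extract a symmetric one-parameter family of deformations, and intersecting this family with the surgery slice for $M(n/0)$.

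First, I would analyze the local structure of $\charvar{\fund{M}}$ near $\rhogeo$. Infinitesimal projective rigidity relative to the boundary gives $H^1(M, \partial M; \lieslact{\rhogeo}) = 0$, and the long exact sequence of the pair yields injectivity of the restriction map $H^1(\fund{M}; \liesl) \hookrightarrow H^1(\fund{\partial M}; \liesl)$, together with an explicit formula for its image via Poincar\'e--Lefschetz duality. Combined with a standard obstruction-theoretic argument, this shows that near $\rhogeo$ the character variety $\charvar{\fund{M}}$ is embedded as an analytic subvariety of the peripheral character variety $\charvar{\fund{\partial M}}$. The rigid longitude hypothesis further says that the conjugacy class of $\rho(\lambda)$ is locally constant on $\charvar{\fund{M}}$, so the local image lies inside the slice where $\lambda$ has fixed parabolic conjugacy class.

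Second, amphicheirality provides an orientation-reversing involution $\sigma\colon M \to M$ inducing a $\Z/2$-action on $\charvar{\fund{M}}$ that fixes $[\rhogeo]$. I would compute the action of $\sigma$ on the peripheral generators, noting that for an amphicheiral knot $\sigma$ inverts $\mu$ and $\lambda$ (up to sign, depending on the type of amphicheirality), and then analyze the $\pm 1$ eigenspace decomposition of the Zariski tangent space $T_{[\rhogeo]}\charvar{\fund{M}}$. Using the local model from the previous step, this yields an explicit $\sigma$-symmetric one-parameter family $[\rho_t]$ of deformations of $\rhogeo$ in which $\rho_t(\lambda)$ stays parabolic while $\rho_t(\mu)$ sweeps out a one-parameter family of conjugacy classes.

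Finally, by Thurston's hyperbolic Dehn surgery theorem, for large $n$ the complete hyperbolic orbifold structure on $M(n/0)$ has holonomy $\rho_n$ close to $\rhogeo$ in $\charvar{\fund{M}}$. Intersecting the family $[\rho_t]$ with the surgery slice $\{\rho : \rho(\mu)^n = 1\}$ produces a one-parameter family of deformations of $\rho_n$ through representations of $\fund{M(n/0)}$; openness of the strictly convex locus near $\rho_n$ (Benoist \cite{Benoist05}, Marquis \cite{Marquis10}) ensures these deformations are holonomies of strictly convex projective structures, and by Mostow rigidity for the closed orbifold the deformations are genuinely projective rather than hyperbolic. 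The main obstacle is Step 2: the dimension count tracking the amphicheiral involution on the tangent space must give exactly one, which in \cite{PortiHeusener09} was verified by direct calculation for the figure-eight and here has to be abstracted from the two rigidity hypotheses.
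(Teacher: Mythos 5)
Your overall architecture (symmetry plus the two rigidity hypotheses, with openness of convexity at the end) points in the right direction, but there are two substantive problems. First, you have inverted the meaning of the hypothesis that the longitude is a rigid slope. In the paper a slope $\alpha$ is \emph{rigid} when the restriction $\homol{M}{\vact{\rhogeo}}\to\homol{\alpha}{\vact{\rhogeo}}$ is \emph{non-trivial}, i.e.\ there are infinitesimal deformations that do move $\alpha$; it does not say that the conjugacy class of $\rho(\lambda)$ is locally constant. This is not cosmetic: the only role of this hypothesis is to force the one-dimensional, $\phi^\ast$-invariant image of $\homol{M}{\vact{\rhogeo}}$ in $\homol{\partial M}{\vact{\rhogeo}}$ to lie in the $+1$-eigenspace of the involution $\phi^\ast$ (since $\iota^\ast_l\circ\phi^\ast=\iota^\ast_l$ while $\iota^\ast_m\circ\phi^\ast=-\iota^\ast_m$), and that sign is the engine of the whole proof. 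Relatedly, infinitesimal projective rigidity relative to the boundary is defined as injectivity of $\homol{M}{\lieslact{\rhogeo}}\to\homol{\partial M}{\lieslact{\rhogeo}}$, which does not give $H^1(M,\partial M;\liesl)=0$: the relative group receives $H^0(\partial M)\neq 0$ in the long exact sequence of the pair.

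Second, and more seriously, the step you defer --- producing an actual one-parameter family rather than a tangent direction --- is the entire content of the theorem, and a dimension count cannot supply it because nothing guarantees smoothness of the character variety at the relevant representation. The paper works directly with the orbifold group $\fund[orb]{M(n/0)}$ at the holonomy $\rho_n$ of the hyperbolic cone structure, not at $\rhogeo$, and does not intersect a family through $\rhogeo$ with the slice $\{\rho\,:\,\rho(\mu)^n=1\}$; note that intersecting a one-parameter family in which $\rho_t(\mu)$ sweeps out conjugacy classes with that slice would generically give isolated points, not a curve. Via Mayer--Vietoris for $O=M\cup N$ the paper shows $H^1(O)\cong\R$ with $\phi^\ast$ acting by $+1$, and $H^2(O)\cong\R$ with $\phi^\ast$ acting by $-1$. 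Consequently every cup-product obstruction $\sum_i u_i\cup u_{k-i}$, being representable by a $\phi^\ast$-invariant class in $H^2(O)$ after averaging the $u_i$ over a finite-order representative of $\phi$, must vanish; Artin's approximation theorem then converts the resulting formal deformation into a genuine analytic family, and the Koszul--Benoist openness results make it strictly convex. Without this obstruction-vanishing mechanism your outline stops at the infinitesimal level.
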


Here $M(n/0)$ is the orbifold obtained by surgering a solid torus with longitudinal singular locus of cone angle $2\pi/n$ along the meridian of $M$. Other than the figure-eight knot, we cannot yet prove that there exist other knots satisfying the hypotheses of Theorem \ref{achiraldef}. However, there is evidence that there should be many situations in which Theorem \ref{achiraldef} applies. First, there is numerical evidence that the two-bridge knot $6_3$ satisfies the hypotheses of Theorem \ref{achiraldef}.  Furthermore, the authors of \cite{CooperLongThist07} examined approximately 4500 closed hyperbolic 3-manifolds and found that only about 1\% of them admit strictly convex deformations of their hyperbolic structure. Combined with Theorem \ref{rigiditytheorem} this suggests that strictly convex deformations of hyperbolic 3-manifolds are somewhat rare.  As a result, two-bridge knots that are infinitesimally rigid relative to the boundary may be quite abundant. Additionally, there are infinitely many 
amphicheiral two-bridge knots, and so there is hope that there are many situations in which Theorem \ref{achiraldef} applies.

The organization of the paper is as follows: section \ref{projective geometry} discusses some basics of projective geometry and projective isometries, while Section \ref{localdef} discusses local and infinitesimal deformations with a focus on bending and its effects on peripheral subgroups. Section \ref{normal forms} discusses some normal forms into which parabolic isometries can be placed. In Section \ref{examples} we use the techniques of the previous section to give a thorough discussion of deformations of the figure-eight knot (Section \ref{fig8section}) and the Whitehead link (Section \ref{whiteheadsection}), which along with computations in \cite{Ballas12}, proves Theorem \ref{rigiditytheorem}. Finally, in Section \ref{rigflex} we discuss some special properties of amphicheiral knots and their representations and prove Theorem \ref{achiraldef}.

\subsection*{Acknowledgements} This project started at the MRC on real projective geometry in June, 2011. I would like to thank Daryl Cooper, who suggested looking at projective deformations of knot complements. I would also like to thank Tarik Aougab for several useful conversations and his help with some of the early computations related to the figure-eight knot. Neil Hoffman also offered useful suggestions on an earlier draft of this paper. Finally, I would like to thank my thesis advisor, Alan Reid for his encouragement and helpful feedback throughout this project.

\section{Projective Geometry and Convex Projective Structures}\label{projective geometry}

Let $V$ be a finite dimensional real vector space. We form the \emph{projectivization of $V$,}
denoted $P(V)$, by dividing  $V\bs\{0\}$ by the action of $\R^\ast$ by scaling. When $V=\R^n$ we
denote $P(\R^n)$ by $\RP^n$. If we take $\gl(V)$ and divide by the action
of $\R^\ast$, acting by multiplication by central linear transformations, we get $\rm{PGL}(V)$. It
is easy to verify that the action of $\gl(V)$ on $V$ descends to an action of
$\rm{PGL}(V)$ on $P(V)$. If $W\subset V$ is a subspace, then we call $P(W)$ a
\emph{projective subspace} of $P(V)$. Note that the codimension of $P(W)$ in
$P(V)$ is the same as the codimension of $W$ in $V$ and the dimension of $P(W)$
is one less than the dimension of $W$. A \emph{projective line} is a
1-dimensional projective subspace.

Let $\Omega$ be a subset of $P(V)$, then $\Omega$ is \emph{convex} if its
intersection with any projective line is connected. An \emph{affine patch} is
the complement in $P(V)$ of a codimension 1 subspace. A convex subset
$\Omega\subset P(V)$ is \emph{properly convex} if its closure, $\overline{\Omega}$, is contained in some
affine patch. A properly convex $\Omega$ of $P(V)$ is \emph{strictly convex} if $\partial \Omega$ does not contain a line segment of positive length in $\partial \Omega$ (here length is measured in the Euclidean metric in some affine patch that contains $\ol{\Omega}$).

We now take a moment to review some examples. An affine patch is a convex subset of $P(V)$, however it is not properly convex as its closure is all of $P(V)$. Let $S=\{(x_1,\ldots x_{n+1})\in \R^{n+1}\vert x_i>0\}$, then the $P(S)$ is a disjoint from the projectivization of the hyperplane $\{(x_1,\ldots, x_{n+1})\in \R^{n+1}\vert \sum_{i=1}^{n+1}x_i=-1\}$ and is thus properly convex. It is straightforward to see that $P(S)$ is a simplex in $\RP^n$ and is thus not strictly convex.

Next, consider the cone, $C=\{(x_1,\ldots,x_{n+1})\in \R^{n+1}\vert x_1^2+\ldots x_{n}^2-x_{n+1}^2<0\}$. $P(C)$ can be identified with $n$-dimensional hyperbolic space (this is the Klein model, see \cite{Ratcliffe06} for more details). $P(C)$ is a disk in $\RP^n$ (when viewed in an appropriate affine patch) and is thus strictly convex.

A projective space $P(V)$ admits a double cover $\pi:S(V)\to P(V)$, where $S(V)$ is
the quotient of $V\bs \{0\}$ by the action of $\R^+$. In the case where $V=\R^n$ we
denote $S(V)$ by $S^n$. The automorphisms of $S(V)$ are $\slpm[]{V}$, which
consists of linear transformations of $V$ with determinant $\pm 1$. Let $[T]\in \PGL[]{V}$ be an equivalence class of linear transformations. By scaling $T$ we can arrange that $T\in \slpm[]{V}$. Additionally, we see that $T\in \slpm[]{V}$ if and only if $-T\in \slpm[]{V}$.  As a result, there is a 2-to-1 map, which by abuse of notation we also call $\pi:\slpm[]{V} \to \PGL[]{V}$ given by $\pi(T)= [T]$.  If we let $\slpm{\Omega}$ and $\PGL{\Omega}$ be subsets of
$\slpm[]{V}$ and $\PGL[]{V}$ preserving $\pi^{-1}(\Omega)$ and $\Omega$, respectively then we see that $\pi$ restricts to a 2-to-1 map from $\slpm[]{\Omega}$ to $\PGL[]{\Omega}$.

When $\Omega$ is properly convex we can construct a section of $\pi$ that is a homomorphism. If $\Omega\subset P(V)$ is properly convex, the preimage of $\Omega$ under $\pi$ will consist of two connected components. Every element of $\slpm[]{\Omega$} either preserves both of these components or interchanges them. Furthermore, $T\in \slpm[]{\Omega}$ preserves both components if and only if $-T$ interchanges them. As a result we see that if $[T]\in \PGL[]{\Omega}$ then there is a unique lift of $[T]$ to $\slpm[]{\Omega}$ that preserves both components of $\pi^{-1}(\Omega)$. Mapping $[T]$ to this lift yields the desired section. Additionally, by using this section we are able to identify $\PGL[]{\Omega}$ with a subgroup of $\slpm[]{\Omega}$. As a result we will regard elements of $\PGL[]{\Omega}$ as linear transformations when convenient. 

We now describe a classification of elements of $\PGL[]{\Omega}$ given in \cite{CooperLongTillman11}. Let $\Omega$ be a properly convex and open subset of $\PGL[]{\Omega}$ and let $T\in \PGL[]{\Omega}$. If $T$ fixes a point in $\Omega$, then $A$ is called \emph{elliptic}. If $T$ acts freely on $\Omega$, and all of its eigenvalues have modulus 1, then $T$ is \emph{parabolic}. Otherwise, $T$ is \emph{hyperbolic}. Since $\Omega$ is properly convex, we can realize $\ol{\Omega}$ as a compact, convex subset of $\R^n$. As a result the Brouwer fixed point theorem tells us that every element of $\PGL[]{\Omega}$ will fix a point in $\ol{\Omega}$. Furthermore, when $\Omega$ is strictly convex, parabolic elements will have a unique fixed point in $\partial \Omega$ and hyperbolic elements will have exactly two fixed points in $\partial \Om$ (see \cite{CooperLongTillman11} Prop 2.8). As a result we see that when $\Omega=\HH^n$ this classification agrees with the standard classification of isometries of hyperbolic space.

Understanding parabolic elements of $\PGL[]{\Omega}$ is an important aspect of the proof of Theorem \ref{rigiditytheorem} and we make crucial use of the following theorem from \cite{CooperLongTillman11}, which demonstrates that parabolic elements of $\PGL[]{\Omega}$ are subject to certain linear algebraic constraints. 

\begin{theorem}[\cite{CooperLongTillman11} Prop 2.9]\label{jnfparabolic}
 Suppose that $\Omega$ is a properly convex domain and that $T\in \PGL[]{\Omega}$
is parabolic, then one of largest Jordan blocks of $T$ has eigenvalue 1.
Additionally, the size of this Jordan block is odd and at least 3. If $\Omega$
is strictly convex then this is the only Jordan block of this size. 
\end{theorem}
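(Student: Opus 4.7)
The plan is to extract the statement from the asymptotic behaviour of the iterates of $T$ on the open cone $C\subset V$ projecting to $\Omega$. Throughout, regard $T$ as the distinguished lift in $\slpm[]{V}$ preserving $C$, so $T\ol{C}\subset\ol{C}$, and use repeatedly the fact that proper convexity of $\Omega$ is equivalent to $\ol{C}\cap(-\ol{C})=\{0\}$: any convex cone containing both $v$ and $-v$ must have $v=0$.

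Let $k$ be the maximal size of a Jordan block of $T$. For a generic $v\in C$, the real-Jordan expansion of $T^n v$ has leading term of order $\binom{n}{k-1}$ with coefficient a sum $\sum_j \mu_j^{n-k+1}w_j$ (read appropriately in the real Jordan form when $\mu_j$ is non-real) indexed over the top-size blocks, where $w_j$ is an eigenvector for the top block with eigenvalue $\mu_j$. Since $T^n v/\binom{n}{k-1}\in\ol{C}$ for all $n$ and $\ol{C}$ is closed, every accumulation point of the sequence lies in $\ol{C}$. Each $\mu_j$ lies on the unit circle; passing to subsequences along which the joint orbit $(\mu_j^n)_j$ converges inside the closed subtorus it generates, one shows that if any top block had $\mu_j\neq 1$ then the orbit of accumulation directions would contain an antipodal pair in the corresponding two-plane (directly when $\mu_j=-1$, and via the centrally symmetric orbit on a circle when $\mu_j$ is non-real), contradicting $\ol{C}\cap(-\ol{C})=\{0\}$. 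Hence every top Jordan block of $T$ has eigenvalue $1$.

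Write $T=I+N$ on the top generalized eigenspace, so $N^k=0$ and $N^{k-1}\neq 0$. Using $T^{-1}=I+(-N+N^2-\cdots+(-1)^{k-1}N^{k-1})$ and the same dominance analysis, $T^n v/\binom{n}{k-1}\to N^{k-1}v_\top$ while $T^{-n}v/\binom{n}{k-1}\to(-1)^{k-1}N^{k-1}v_\top$, where $v_\top$ is the projection of $v$ onto the top generalized eigenspace. Both limits lie in $\ol{C}$, so if $k$ were even then $\pm N^{k-1}v_\top\in\ol{C}$ would force $N^{k-1}v_\top=0$; applied as $v$ ranges over an open subset of $C$, this contradicts $N^{k-1}\neq 0$, so $k$ must be odd. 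If $k=1$ then all Jordan blocks have size $1$ and eigenvalue $1$, so $T=I$, contradicting freeness of the action; hence $k\geq 3$.

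Finally, assume $\Omega$ is strictly convex and, for contradiction, that $T$ has at least two top-size Jordan blocks. Then $W:=\mathrm{Im}\,N^{k-1}$ has $\dim W\geq 2$, and the identity $T(N^{k-1}u)=N^{k-1}u+N^k u=N^{k-1}u$ shows that every nonzero element of $W$ is a genuine $1$-eigenvector. The linear surjection $N^{k-1}\colon V\to W$ sends the open convex cone $C$ onto an open convex subset of $W$ contained in $\ol{C}\cap W$; projectivising produces a nontrivial arc of fixed points of $T$ inside $\ol{\Omega}$, and by freeness of $T$ on $\Omega$ this arc lies in $\partial\Omega$, contradicting strict convexity. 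The main technical obstacle is the subsequence-and-symmetry argument in the second paragraph: in the real Jordan form one must verify carefully that whenever some $\mu_j\neq 1$ the collective orbit of leading-order directions contains genuine antipodal pairs rather than having them cancelled against contributions of other top blocks, which requires isolating individual block contributions using the structure of the subtorus generated by the $\mu_j$.
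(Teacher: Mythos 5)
First, note that the paper does not prove this statement: it is quoted verbatim from Cooper--Long--Tillmann (their Prop.\ 2.9) and used as a black box, so there is no in-paper proof to compare against. Your overall strategy --- analyzing the growth of $T^n v$ in the cone $C$ over $\Omega$ and exploiting $\ol{C}\cap(-\ol{C})=\{0\}$ --- is the right one and is essentially how such statements are proved, but as written there are two genuine gaps. The first is the step you yourself flag as ``the main technical obstacle,'' and it does not just need more care: it fails. If the top-size blocks carry eigenvalues $\mu_1=1$ and $\mu_2=-1$, the set of accumulation directions is $\{w_1+w_2,\ w_1-w_2\}$; these two vectors have antipodal components in the second block's plane but are \emph{not} antipodal, and a proper cone can easily contain both (e.g.\ $\{x>|y|\}$ contains $(1,\tfrac12)$ and $(1,-\tfrac12)$). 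So no contradiction arises, and indeed your intended conclusion --- that \emph{every} top block has eigenvalue $1$ --- is strictly stronger than the theorem, which only asserts that \emph{some} largest block does. The statement you actually need is obtained differently: the closure $G$ of $\{(\mu_1^n,\dots,\mu_m^n)\}$ is a compact group, every $\sum_j\zeta_jw_j$ with $\zeta\in G$ is a nonzero element of $\ol{C}$ (nonzero by block-independence of the $w_j$), and averaging against Haar measure on $G$ kills every coordinate with $\mu_j\neq1$; if no $\mu_j$ equalled $1$ the average would be $0$, contradicting the existence of a linear functional strictly positive on $\ol{C}\setminus\{0\}$. With that weaker (correct) conclusion your parity argument survives by restricting to the eigenvalue-$1$ top blocks and passing to the subsequence where $(\mu_j^n)\to(1,\dots,1)$, though your $k\geq3$ step must be replaced by the standard observation that $k=1$ forces $\langle T\rangle$ to be relatively compact, hence $T$ elliptic.

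The second gap is in the strict convexity clause. The theorem excludes a \emph{second Jordan block of maximal size with any eigenvalue}, but your argument via $W=\mathrm{Im}\,N^{k-1}$ only treats the case where both top blocks have eigenvalue $1$ (only then is $W$ a $\geq2$-dimensional space of fixed vectors). You must separately rule out a second top block with unit eigenvalue $\mu\neq1$: there one gets points $w_1+\zeta w_2\in\ol{C}$ for all $\zeta$ in the (finite or circular) orbit closure of $\mu$, whose average is the fixed vector $w_1$; since $w_1$ is an interior point of a nondegenerate segment contained in $\ol{\Omega}$ and $T$ acts freely on $\Omega$, the whole segment lies in $\partial\Omega$, contradicting strict convexity. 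Without this case the uniqueness claim is unproved.
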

  
This theorem is particularly useful in small dimensions. For example, in dimensions 2 or
3 the matrix representing any parabolic element that preserves a properly convex subset is conjugate to either

\[
 \begin{pmatrix}
  1 & 1 & 0\\
  0 & 1 & 1\\
  0 & 0 & 1 
 \end{pmatrix} \rm{ or }
 \begin{pmatrix}
  1 & 0 & 0 & 0\\
  0 & 1 & 1 & 0\\
  0 & 0 & 1 & 1\\
  0 & 0 & 0 & 1
 \end{pmatrix}
\]
respectively. This theorem also tells us that in these small dimensional cases, 1 is the only
possible eigenvalue of a parabolic that preserves a properly convex domain. 

When $\Omega$ is properly convex we can also gain more insight into the structure of
$\PGL[]{\Omega}$ by considering a $\PGL[]{\Omega}$-invariant metric on $\Omega$. Given $x_1,x_2\in
\Omega$ we define the \emph{Hilbert metric} as follows: let $\ell$ be the line
segment in $\Omega$ between $x_1$ and $x_2$. Proper convexity tells us that $\ell$
intersects $\partial \Omega$ in two points $y$ and $z$, where $y$ is the point on $\ell$ closer to $x_1$ and $z$ is the point on $\ell$ closer to $x_2$. We define
$\dhilb{x_1}{x_2}=\log([y:x_1:x_2:z])$, where $[y:x_1:x_2:z]=\frac{\abs{x_2-y}\abs{z-x_1}}{\abs{x_1-y}\abs{z-x2}}$ is the cross ratio. Since the cross ratio is invariant under projective
automorphisms this metric is invariant under
$\PGL[]{\Omega}$. Furthermore, projective lines are geodesics for this metric. The Hilbert metric gives rise to a Finsler structure on $\Omega$, and in
the case that $\Omega$ is an ellipsoid it coincides with twice the standard
hyperbolic metric. 

We now use this metric to understand certain subsets of $\PGL[]{\Omega}$.

\begin{lemma}\label{bdddist}
 Let $\Omega$ be a properly convex domain and let $x$ be a point in the interior
of $\Omega$, then the set $\PGL[]{\Omega}_x^K=\{T\in\PGL[]{\Omega}\mid \dhilb{x}{Tx}\leq
K\}$ is compact.
\end{lemma}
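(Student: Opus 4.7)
The plan is to exhibit $\PGL[]{\Omega}_x^K$ as a closed and relatively compact subset of $\PGL[]{V}$, so it suffices to establish three things: (i) closed Hilbert balls centered at $x$ are compact in $\Omega$; (ii) every $T$ in the set has controlled images on a full projective frame, hence lies in a compact region of $\PGL[]{V}$; and (iii) the defining conditions are closed in $\PGL[]{V}$.

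For (i), since $\Omega$ is properly convex, $\overline{\Omega}$ is a closed subset of an affine patch and hence compact. The cross-ratio formula defining $\dhilb{\cdot}{\cdot}$ forces $\dhilb{x}{y} \to \infty$ whenever $y$ approaches $\partial\Omega$, since one of the two boundary points $y, z$ on the chord through $x$ and $y$ collapses onto $y$. Consequently the closed ball $\overline{B}_R(x) = \{y \in \Omega \mid \dhilb{x}{y} \leq R\}$ is bounded away from $\partial\Omega$, hence is a closed subset of $\overline{\Omega}$ lying in the interior $\Omega$, and so is compact.

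For (ii), fix a projective frame $p_0, p_1, \ldots, p_{n+1} \in \Omega$, where $\dim V = n+1$. For any $T \in \PGL[]{\Omega}_x^K$, the isometry property of the Hilbert metric gives
\[
\dhilb{x}{T p_i} \leq \dhilb{x}{T x} + \dhilb{T x}{T p_i} = \dhilb{x}{T x} + \dhilb{x}{p_i} \leq K + C,
\]
where $C = \max_i \dhilb{x}{p_i}$. Each $T p_i$ therefore lies in the compact set $\overline{B}_{K + C}(x) \subset \Omega$. Since an element of $\PGL[]{V}$ is determined by, and depends continuously on, the images of a projective frame, and since $(T p_0, \ldots, T p_{n+1})$ remains a projective frame as $T$ is invertible, the assignment $T \mapsto (T p_0, \ldots, T p_{n+1})$ sends $\PGL[]{\Omega}_x^K$ into a compact subset of $\Omega^{n+2}$, exhibiting $\PGL[]{\Omega}_x^K$ as relatively compact in $\PGL[]{V}$.

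For (iii), $\PGL[]{\Omega}$ is closed in $\PGL[]{V}$, because preservation of the closed set $\overline{\Omega}$ is a closed condition and any projective automorphism preserving $\overline{\Omega}$ also preserves its interior $\Omega$. The inequality $\dhilb{x}{T x} \leq K$ is closed in $T$ by continuity of the Hilbert metric, using the fact from (i) that the limit of $T x$ remains inside the compact subset $\overline{B}_K(x) \subset \Omega$. Hence $\PGL[]{\Omega}_x^K$ is closed in $\PGL[]{V}$, and combined with relative compactness, it is compact. I expect the subtlest point to be the continuous inversion in step (ii): one must verify that a limit of the projective frames $(T_m p_0, \ldots, T_m p_{n+1})$ remains a projective frame, so that the reconstructed $T$ lives in $\PGL[]{V}$ rather than degenerating. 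This is most cleanly handled by lifting to $\slpm[]{V}$ using the canonical section afforded by proper convexity; a rank drop in the limit would force the limiting linear map to collapse $\Omega$ into a proper projective subspace, contradicting invariance of $\Omega$ under the limit.
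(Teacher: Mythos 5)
Your overall strategy is the same as the paper's: fix a projective frame inside $\Omega$ containing $x$, use the triangle inequality and the isometry property of the Hilbert metric to confine the images of the frame points to a compact Hilbert ball in $\Omega$, and reduce compactness of $\PGL[]{\Omega}_x^K$ to showing that the limiting configuration of frame images is still a frame. Your step (i), your step (iii), and the triangle-inequality estimate in (ii) are all fine (indeed you are more explicit than the paper about closedness). The gap is exactly at the point you flag as subtlest, and the resolution you propose does not work.

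You argue that a rank drop is impossible because "the limiting linear map would collapse $\Omega$ into a proper projective subspace, contradicting invariance of $\Omega$ under the limit." But in the degenerate scenario there is no limiting automorphism to invoke: the lifts to $\slpm[]{V}$ have determinant $\pm 1$, so if they do not subconverge in $\slpm[]{V}$ their norms blow up, and the limit $S$ of the norm-normalized matrices is singular, defined only off $P(\ker S)$, and does not preserve $\Omega$ — there is no "invariance under the limit" to contradict. That $S$ maps $\Omega\setminus P(\ker S)$ into the proper subspace $P(\mathrm{im}\,S)$ is just what singular maps do and is not by itself absurd; note that whenever $\PGL[]{\Omega}$ is non-compact (e.g.\ $\Omega=\HH^n$) there really are sequences of automorphisms whose frame images degenerate in exactly this way. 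What rules degeneration out here is the hypothesis $\dhilb{x}{Tx}\le K$, and your non-degeneration argument never uses it. The missing ingredient is a metric contradiction, which is the heart of the paper's proof: if the limit points fail to be in general position, then some point $y\in\Omega$ distinct from $x_0=x$, lying in the projective span of the other basis points, has $\gamma_i y$ converging to the same \emph{interior} point $x_0^\infty$ as $\gamma_i x_0$; since $d_{\rm H}$ is continuous and finite near $x_0^\infty\in\Omega$, this forces $\dhilb{\gamma_i x_0}{\gamma_i y}\to 0$, contradicting $\dhilb{\gamma_i x_0}{\gamma_i y}=\dhilb{x_0}{y}>0$. (Equivalently, in your language: one must first prove that a singular limit $S$ sends interior points off its kernel to $\partial\Omega$ — because each positive-dimensional fiber of $S$ meets $\Omega$ in a set of positive Hilbert diameter that would otherwise be crushed to an interior point — and only then does the bound $\dhilb{x}{T_m x}\le K$, which pins $T_m x$ in a compact subset of $\Omega$, yield the contradiction.) Your outline can be repaired along these lines, but as written this step is a genuine gap.
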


\begin{proof}
 Let $\mathcal{B}=\{x_0,\ldots, x_n\}$ be a projective basis (i.e.\ a set of $n+1$ points, no $n$ of which live in a common projective hyperplane) that is a subset of $\Omega$ and such that $x_0=x$. The group $\PGL[n+1]{R}$ acts simply transitively on the set of projective bases, and the orbit of a fixed projective basis provides a homeomorphism from $\PGL[n+1]{\R}$ to a subset $(\RP^n)^{n+1}$.
 
Next, let $\gamma_i$ be a sequence of elements of $\PGL[]{\Omega}_x^K$. The elements $\gamma_i x_0$ all live in the compact ball of radius $K$ centered at $x$ and so by passing to a subsequence we can assume that $\gamma_i x_0\to x_0^\infty\in \Omega$. Next, we claim that the by passing to a subsequence we can assume that $\gamma_ix_j\to x_j^\infty \in \Omega$ for $1\leq j\leq n$. To see this, observe that 
 $$\dhilb{x_0^\infty}{\gamma_ix_j}\leq\dhilb{x_0^\infty}{\gamma_i x_0}+\dhilb{\gamma_i x_0}{\gamma_i x_j}=\dhilb{x_0^\infty}{\gamma_i x_0}+\dhilb{x_0}{x_j},$$
 and so all of the $\gamma_ix_j$ live in a compact ball centered at $x_0^\infty$.  The proof will be complete if we can show that the set $\{x_0^\infty\ldots,x_{n}^\infty\}$ is a projective basis. Suppose that this set is not a projective basis, then without loss of generality we can assume that the set $\{x_0^\infty,\ldots, x_{n-1}^\infty\}$ is contained in a projective hyperplane. Thus $x_0^\infty$ is contained in the projective plane spanned by $\{x_1^\infty,\ldots, x_{n-1}^\infty\}$. As a result, we can find a point, $y$, in the projective hyperplane spanned by $\{x_1,\ldots, x_{n-1}\}$ such that $\gamma_iy\to x_0^\infty$.  However, since $\mathcal{B}$ is a projective basis we see that 
 $$0<\dhilb{x_0}{y}=\dhilb{\gamma_ix_0}{\gamma_iy},
$$
but
$$
\dhilb{\gamma_ix_0}{\gamma_iy} \to 0,
$$
 which is a contradiction.  
\end{proof}

 As we mentioned previously, isometries of a strictly convex domain interact in ways similar to hyperbolic isometries. As an example of this phenomenon, recall that if $\phi,\psi\in\so{n}$, with $\phi$ hyperbolic, then $\phi$ and $\psi$ cannot generate a discrete subgroup of $\so{n}$ if they
share exactly one fixed point. In particular, parabolic and hyperbolic automorphisms cannot
share fixed points in a discrete group. A similar phenomenon occurs for $\PGL[]{\Om}$, when $\Om$ is strictly convex.

\begin{proposition}\label{discfixedpt}
 Let $\Om$ be a strictly convex domain and $\phi,\psi\in\PGL[]{\Om}$ with $\phi$
hyperbolic. If $\phi$ and $\psi$ have exactly one fixed point in common, then the
subgroup generated by $\phi$ and $\psi$ is not discrete.
\end{proposition}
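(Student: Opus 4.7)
The plan is to assume for contradiction that $\Gamma=\langle\phi,\psi\rangle$ is discrete and then exhibit an infinite sequence of distinct elements of $\Gamma$ whose displacement at a chosen basepoint remains uniformly bounded; combined with Lemma~\ref{bdddist} and discreteness this will be impossible.

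After replacing $\phi$ by $\phi^{-1}$ if necessary, I assume the common fixed point $p^+$ is the attracting fixed point of $\phi$, so $\phi^nx\to p^+$ for every $x\in\Omega$, while the repelling fixed point $p^-$ satisfies $\psi(p^-)\neq p^-$. The sequence I would analyze is $\gamma_n=\phi^{-n}\psi\phi^n\in\Gamma$. For a fixed $x_0\in\Omega$, set $y_n=\phi^nx_0$, so that $y_n\to p^+$ and, by continuity of $\psi$, also $\psi y_n\to p^+$. Since $\phi^n$ is a Hilbert isometry,
$$\dhilb{x_0}{\gamma_nx_0}=\dhilb{y_n}{\psi y_n},$$
so the plan reduces to proving that the right-hand side stays bounded in $n$.

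The main geometric step is this bounded-displacement estimate, which I would establish from the cross-ratio formula for the Hilbert distance. Let $\ell_n$ be the projective line through $y_n$ and $\psi y_n$ and let $\{a_n,b_n\}=\ell_n\cap\partial\Omega$. After passing to a subsequence the lines $\ell_n$ accumulate on a projective line through $p^+$, and strict convexity of $\Omega$ guarantees that this limiting line meets $\overline{\Omega}$ either only at $p^+$ or at $p^+$ together with exactly one other boundary point. In either case, a careful analysis of the relative positions of $a_n,y_n,\psi y_n,b_n$ as the interior points collapse onto $p^+$ shows that the cross-ratio $[a_n:y_n:\psi y_n:b_n]$, and hence $\dhilb{y_n}{\psi y_n}$, remains uniformly bounded. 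Granted this, Lemma~\ref{bdddist} places every $\gamma_n$ in a single compact subset of $\PGL[]{\Omega}$, and discreteness of $\Gamma$ forces the sequence $\{\gamma_n\}$ to take only finitely many values.

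By the pigeonhole principle, $\gamma_n=\gamma_m$ for some $n\neq m$, which is equivalent to $\phi^{n-m}$ commuting with $\psi$. The power $\phi^{n-m}$ is again hyperbolic and, being a power of $\phi$, shares the fixed-point set $\{p^+,p^-\}$; any element commuting with it must permute this pair, and combined with $\psi(p^+)=p^+$ this forces $\psi(p^-)=p^-$, contradicting the hypothesis that $\phi$ and $\psi$ share exactly one fixed point. The principal obstacle is the bounded-displacement estimate itself, which is the one place where strict convexity is used essentially: a merely properly convex domain with line segments in its boundary could support the same algebraic configuration while destroying the needed cross-ratio bound.
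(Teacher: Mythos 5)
Your overall strategy is the same as the paper's: conjugate $\psi$ by powers of $\phi$, show the resulting sequence of conjugates has uniformly bounded displacement at a basepoint, invoke Lemma~\ref{bdddist} to trap it in a compact set, and contradict discreteness. (Your ending via pigeonhole --- $\gamma_n=\gamma_m$ forces $\phi^{n-m}$ to commute with $\psi$, hence $\psi$ permutes $\{p^+,p^-\}$ and must fix $p^-$ --- is a perfectly acceptable variant of the paper's ending, which instead shows the conjugates are pairwise distinct and extracts a non-constant convergent subsequence.)

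The problem is the step you yourself single out as the main one: the bounded-displacement estimate $\dhilb{y_n}{\psi y_n}\leq C$ is asserted rather than proved, and the heuristic you offer for it does not work. Knowing that $y_n\to p^+$ and $\psi y_n\to p^+$ does not bound $\dhilb{y_n}{\psi y_n}$: already in the Klein model of $\HH^2$, two sequences converging to the same boundary point can drift infinitely far apart in the Hilbert metric (compare a radial approach with a tangential one). Likewise, the observation that the chords $\ell_n$ accumulate on a line through $p^+$ meeting $\ol{\Om}$ in at most two points gives no control on the cross-ratio $[a_n:y_n:\psi y_n:b_n]$, since all four points may collapse together at incomparable rates. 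Any correct proof of the estimate must use the specific structure at the shared fixed point, and this is precisely the ingredient your argument omits and the paper supplies: by \cite[Prop 4.6]{CooperLongTillman11} the shared fixed point is a $C^1$ point of $\partial\Om$, so $\phi$ and $\psi$ both preserve the unique supporting hyperplane there; in the affine chart complementary to that hyperplane both maps become affine, $\phi(x)=Ax$ and $\psi(x)=Bx+c$ with $c\neq 0$, and the conjugates take the explicit form $x\mapsto A^nBA^{-n}x+A^nc$ with translation parts $A^nc=\phi^n(\psi(x_2))$ converging to the second fixed point. That normal form is what makes the displacement bound (and, in the paper's version, the distinctness of the conjugates) verifiable; without it, your ``careful analysis of relative positions'' is a placeholder for essentially the entire content of the proposition.
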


\begin{proof}
 Notice the similarity between this proof and the proof in  \cite[Thm
5.5.4]{Ratcliffe06}. Suppose for contradiction that the subgroup generated by
$\phi$ and $\psi$ is discrete. Since $\Om$ is strictly convex, $\phi$ has
exactly two fixed points (see \cite[Prop 2.8]{CooperLongTillman11}), $x_1$ and $x_2$. Without loss of generality we can
assume that they correspond to the eigenvalues of smallest and largest modulus,
respectively, and that $\psi$ fixes $x_1$ but not $x_2$. From \cite[Prop
4.6]{CooperLongTillman11} we know that $x_1$ is a $C^1$ point of $\partial \Omega$, and hence there
is a unique supporting hyperplane to $\Om$ at $x_1$ and both $\phi$ and $\psi$
preserve it. This means that there are coordinates with respect to which both $\phi$ and $\psi$ are affine. In particular we can assume that 

\[
 \phi(x)=Ax,\ \psi(x)=Bx+c,
\]
with $c\neq 0$. We now examine the result of conjugating $\psi$ by powers of $\phi$: 
\[
 \phi^{n}\psi\phi^{-n}(x)=A^{n}BA^{-n}x+A^nc
\]
 The fixed point $x_2$ (which had the largest eigenvalue) has now been moved to
the origin and $\phi$ has been projectively scaled so that $x_2$ has eigenvalue 1. Since $\Omega$ is strictly convex, $x_2$ is the unique attracting fixed point, and so after possibly passing to a subsequence we can assume that $\{A^nc\}$ is a sequence of
distinct vectors that converge to the origin. Since
$A^nc=\phi^n\psi\phi^{-n}(0)$ we see that $\{\phi^n\psi\phi^{-n}\}$ is a sequence of distinct automorphisms. The elements of $\{\phi^n\psi\phi^{-n}\}$ all move points on
line between the fixed points of $\phi$ a fixed bounded distance, and so by Lemma
\ref{bdddist} this sequence has a convergent subsequence, which by construction,
is not eventually constant. The existence of such a sequence contradicts
discreteness.   

\end{proof}

The next lemma describes subgroups of $\PGL[]{\Omega}$ that preserve a common geodesic.

\begin{lemma}\label{presgeodesic}
Let $\Om$ be a strictly convex domain and let $\Gamma\leq \PGL[]{\Om}$ be a discrete torsion free subgroup of elements that all preserve a common geodesic in $\Omega$, then $\Gamma$ is infinite cyclic and generated by a hyperbolic element.
\end{lemma}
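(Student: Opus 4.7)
The plan is to use the Hilbert metric to realize $\Gamma$ as a discrete group of isometries of a line, and then classify what can happen. Let $\ell$ denote the common geodesic and $p,q \in \partial\Omega$ its two endpoints in $\partial\Omega$. Each $\gamma \in \Gamma$ preserves $\ell$ and so permutes $\{p,q\}$; let $\Gamma_0 \leq \Gamma$ be the subgroup of index at most $2$ that fixes both endpoints. Since $\ell$ with its Hilbert metric is isometric to $\mathbb{R}$, the elements of $\Gamma_0$ act on it by orientation-preserving isometries fixing the two ends, hence by translations, yielding a homomorphism $\tau \colon \Gamma_0 \to \mathbb{R}$.

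I would next show that $\tau$ is injective and has discrete image. If $\tau(\gamma) = 0$, then $\gamma$ fixes every point of $\ell$, and in particular a point $x$ in the interior of $\Omega$; hence $\gamma$ lies in the stabilizer $\PGL[]{\Omega}_x^0$, which is compact by Lemma \ref{bdddist}. The cyclic group $\langle\gamma\rangle$ is then a finite subgroup of the discrete group $\Gamma$, so $\gamma = 1$ by torsion-freeness. For discreteness of $\tau(\Gamma_0)$, fix $x \in \ell$; since $\dhilb{x}{\gamma x} = |\tau(\gamma)|$ for $\gamma \in \Gamma_0$, Lemma \ref{bdddist} implies that $\{\gamma \in \Gamma_0 : |\tau(\gamma)| \leq 1\}$ is the intersection of the discrete set $\Gamma$ with a compact set, and hence is finite. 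Thus $\tau(\Gamma_0)$ is a discrete subgroup of $\mathbb{R}$, so $\Gamma_0$ is either trivial or infinite cyclic.

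Assume $\Gamma$ is nontrivial. Since $[\Gamma : \Gamma_0] \leq 2$ and $\Gamma$ is torsion-free, $\Gamma_0$ is also nontrivial, so $\Gamma_0 = \langle\gamma_0\rangle$ for some generator $\gamma_0$. The element $\gamma_0$ fixes two distinct boundary points $p,q$, so it cannot be parabolic (by strict convexity, parabolic elements have a unique boundary fixed point, cf.\ Prop 2.8 of \cite{CooperLongTillman11}) and cannot be elliptic (by the preceding argument); therefore $\gamma_0$ is hyperbolic. Finally, if some $\eta \in \Gamma$ swapped $p$ and $q$, then its restriction to $\ell \cong \mathbb{R}$ would be an orientation-reversing isometry, which must fix a midpoint; then $\eta$ would be elliptic and hence trivial, contradicting the swap. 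Therefore $\Gamma = \Gamma_0 = \langle\gamma_0\rangle$ is infinite cyclic, generated by the hyperbolic element $\gamma_0$.

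The substantive step is establishing discreteness of $\tau(\Gamma_0)$ in $\mathbb{R}$, which follows immediately from Lemma \ref{bdddist}; the rest is bookkeeping with the elliptic/parabolic/hyperbolic trichotomy together with the strict-convexity constraint that parabolic elements have only one boundary fixed point.
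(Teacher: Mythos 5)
Your proof follows essentially the same route as the paper's: a translation-length homomorphism from $\Gamma$ to $\R$ along $\ell$, trivial kernel because torsion-freeness forces a free action, discreteness of the image in $\R$, and hyperbolicity of the generator because parabolics in a strictly convex domain have a unique boundary fixed point. You are in fact somewhat more careful than the paper, which does not separately treat elements swapping the endpoints of $\ell$ nor spell out that the discreteness of the image follows from Lemma \ref{bdddist}; both points are handled correctly in your write-up.
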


\begin{proof}
Since the elements of $\Gamma$ all preserve a common geodesic, $\ell\subset \Omega$, there is a homomorphism from $\phi_\ell:\Gamma \to \R$ that assigns to each element its translation length (in the Hilbert metric) along $\ell$. Since $\Gamma$ is torsion free it acts freely on $\Omega$ and thus this map has trivial kernel. The image of $\Gamma$ under $\phi_\ell$ is a discrete subgroup of $\R$ and is thus $\Gamma$ is infinite cyclic. Since $\Gamma$ is torsion free, every element of $\Gamma$ is either hyperbolic or parabolic. However, parabolic elements have unique fixed points and thus cannot preserve a geodesic. Thus we conclude that the generator of $\Gamma$ must be hyperbolic. 

\end{proof}

A peripheral subgroup of a finite volume hyperbolic 3-manifold is isomorphic to a free Abelian group of rank 2. In the following lemma we analyze how such a group can act on a strictly convex domain.

\begin{lemma}\label{scparabolic}
Let $\Omega$ be strictly convex and let $\Gamma\subset \PGL[]{\Omega}$ be a discrete free Abelian subgroup of rank at least 2. If $1 \neq \gamma\in \Gamma$ then $\gamma$ is parabolic. 
\end{lemma}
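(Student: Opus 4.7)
The plan is to show that a non-identity element of $\Gamma$ cannot be elliptic and cannot be hyperbolic, so the trichotomy preceding Theorem \ref{jnfparabolic} forces it to be parabolic. Since $\Gamma$ is free abelian, it is torsion-free, a property I will use throughout.

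First I would dispatch the elliptic case. Suppose some $\gamma \in \Gamma$ fixes an interior point $x \in \Omega$. Applying Lemma \ref{bdddist} with $K = 0$ shows that the full stabilizer of $x$ in $\PGL[]{\Omega}$ is compact. The cyclic group $\langle \gamma \rangle$ is a discrete subgroup of this compact stabilizer, hence finite, and then torsion-freeness forces $\gamma = 1$.

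Next I would handle the hyperbolic case, which is the main step. Suppose for contradiction that some $\gamma \in \Gamma$ is hyperbolic. Strict convexity guarantees that $\gamma$ has exactly two fixed points $x_1, x_2 \in \partial \Omega$ (Prop 2.8 of \cite{CooperLongTillman11}), and the open projective line segment $\ell \subset \Omega$ joining them is a Hilbert geodesic. For any $\gamma' \in \Gamma$, abelianness gives $\gamma' \gamma (\gamma')^{-1} = \gamma$, so $\gamma'$ permutes the fixed point set $\{x_1, x_2\}$ and hence preserves $\ell$ setwise. Thus every element of $\Gamma$ preserves the common geodesic $\ell$, so Lemma \ref{presgeodesic} forces $\Gamma$ to be infinite cyclic, contradicting the hypothesis that the rank of $\Gamma$ is at least 2.

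The critical observation is that in a strictly convex setting the centralizer of a hyperbolic element collapses onto its axis, which together with Lemma \ref{presgeodesic} eliminates hyperbolic elements from any abelian subgroup of rank at least 2. I do not foresee a substantive obstacle beyond recognizing this structural point; Proposition \ref{discfixedpt} is not needed here, because abelianness already gives setwise invariance of both endpoints, not merely one.
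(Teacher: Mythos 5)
Your proof is correct, and its overall skeleton matches the paper's: discreteness plus torsion-freeness rules out nontrivial elliptics, and the hyperbolic case is eliminated by showing all of $\Gamma$ preserves the axis of $\gamma$ and then invoking Lemma \ref{presgeodesic} to force $\Gamma$ to be cyclic. The one genuine difference is the middle step. The paper gets axis-preservation by first asserting a global fixed point on $\partial\Omega$ and then appealing to Proposition \ref{discfixedpt} (two elements of a discrete group sharing exactly one fixed point is impossible) to upgrade one shared fixed point to a shared fixed-point \emph{set}. You instead observe that $\gamma'\gamma(\gamma')^{-1}=\gamma$ forces $\gamma'$ to permute the two-element fixed-point set of $\gamma$, hence to preserve the projective line through them and therefore the geodesic $\ell=\Omega\cap L$. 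Your route is more elementary and self-contained: it needs only strict convexity (for the two-fixed-point count) and pure algebra, whereas the paper's route leans on the dynamical non-discreteness argument of Proposition \ref{discfixedpt}. You also supply a justification (compactness of the point stabilizer via Lemma \ref{bdddist} with $K=0$) for the elliptic exclusion that the paper merely asserts. Both arguments ultimately funnel the contradiction through Lemma \ref{presgeodesic}, so nothing essential is gained or lost; your version is simply a little tighter.
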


\begin{proof}
Since $\Gamma$ is discrete and torsion free we see that $\gamma$ is either parabolic or hyperbolic. Suppose for contradiction that $\gamma$ is hyperbolic. Since $\Gamma$ is Abelian we see that $\Gamma$ has a global fixed point on $\partial \Omega$. By Lemma \ref{discfixedpt} we see that every element of $\Gamma$ must have the same fixed point set as $\gamma$, and thus we see that $\Gamma$ preserves the geodesic connecting the two fixed points of $\gamma$. Lemma \ref{presgeodesic} tells us that $\Gamma$ must be cyclic, but that contradicts the fact that $\Gamma$ has rank at least 2.  
\end{proof}

To close this section we briefly describe convex real projective structures on manifolds. For more details about real projective structures and more general $(G,X)$ structures see \cite{Ratcliffe06,ThurstonNotes, Goldman87}. A \emph{real projective structure} on an $n$-manifold $M$ is an atlas of charts $U\to \RP^n$ such that the transition functions are elements of $\PGL[n+1]{\R}$. When equipped with a real projective structure, we call $M$ a \emph{real projective manifold}. We can globalize the data of an atlas by selecting a chart and constructing a local diffeomorphism $D:\tilde M\to \RP^n$ using analytic continuation. This construction also yields a representation $\rho:\fund M \to \PGL[n+1]{\R}$ that is equivariant with respect to $D$. The map and representation are known as a \emph{developing map} and \emph{holonomy representation}, respectively. The only ambiguity in this construction is the choice of initial chart, and different choices of initial charts will result in developing maps which differ 
by 
post composing by an element of $g\in \PGL[n+1]{\R}$. Additionally, the holonomy representations 
will differ by conjugation by $g$. 

When the map $D$ is a diffeomorphism onto a properly (resp.\ strictly) convex set, $\Omega$, we say that the real projective structure is \emph{properly (resp.\ strictly) convex}. In this case the holonomy representation is both discrete and faithful and $M\cong \Omega/\rho(\Gamma)$, where $\rho$ is the holonomy representation. A key example to keep in mind are complete hyperbolic structures.
\section{Local and Infinitesimal Deformations}\label{localdef}

Unless explicitly mentioned, $\Gamma$ will henceforth denote the fundamental group of a finite volume hyperbolic 3-manifold. By Mostow rigidity, there is a unique conjugacy class of representations of $\Gamma$ that is faithful and has discrete image in $\so{3}$. We call this class the \emph{geometric representation of $\Gamma$} and denote it $[\rhogeo]$. From the previous section we know that real projective structures on a manifold give rise to conjugacy classes of representations of its fundamental group into $\PGL[4]{\R}$. As we shall see, if we want to study real projective structures near the complete hyperbolic structure it suffices to study conjugacy classes of representations near $[\rhogeo]$. 

We now set some notation. Let $\rvar{\Gamma}$ be the $\pgl$ \emph{representation variety} of $\Gamma$. The group $\pgl$ acts on $\rvar{\Gamma}$ by conjugation, and the quotient by this action is the \emph{character variety}, which we denote $\charvar{\Gamma}$.  The character variety is not globally a variety because of pathologies of the action by conjugation, however at $\rhogeo$ the action is nice enough to guarantee that $\charvar{\Gamma}$ has the local structure of a variety.  

Next, we define a refinement of $\charvar{\Gamma}$ that better controls the representations on the boundary. Recall, that by Theorem \ref{jnfparabolic} the only conjugacy class of parabolic element that is capable of preserving a properly convex domain is
\begin{equation}\label{so31parabolic} \begin{pmatrix}
  1 & 0 & 0 & 0\\
  0 & 1 & 1 & 0\\
  0 & 0 & 1 & 1\\
  0 & 0 & 0 & 1
 \end{pmatrix}.
 \end{equation}
Because this is the conjugacy class of a parabolic element of ${\rm SO}(3,1)$ we will call parabolic elements conjugate to \eqref{so31parabolic} \emph{$\so{3}$-parabolic}. Let $\rvarpar{\Gamma}$ be the elements of $\rvar{\Gamma}$ such that peripheral elements of $\Gamma$ are mapped to $\so{3}$-parabolic elements, and let the \emph{relative character variety}, denoted $\charvarpar{\Gamma}$, be the corresponding quotient by the action $\pgl$. 

If $\rho$ is a representation, then a \emph{deformation of $\rho$} is a smooth
map, $\sigma(t): (-\varep,\varep)\to \rvar{\Gamma}$ such that
$\sigma(0)=\rho$. Often times we will denote $\sigma(t)$ by $\sigma_t$. If a
class $[\sigma]$ is an isolated point of the $\charvar{\Gamma}$ then we say that
$\Gamma$ is \emph{locally projectively rigid at $\sigma$}. Similarly, we say that $\Gamma$ is \emph{locally projectively rigid  relative to the boundary at $\sigma$} when $[\sigma]$ is isolated in $\charvarpar{\Gamma}$. 

We now relate these varieties to projective structures. For more details about geometric structures on manifolds see \cite[\S 2 and \S3]{Goldman87}. Let $M$ be a 3-manifold with $\Gamma=\fund{M}$.  We define the space of projective structures on $M$, denoted $\defspace{M}$ as the set of equivalence classes 
$$\{(f,N)\vert \text{$N$ is a real projective manifold and $f:M\to N$ is a diffeomorphism}\}/\sim$$
where $(f,N)\sim (f',N')$ if there exists a projective transformation, $g$, defined on the complement of a collar neighborhood of $\partial N$ onto the complement of a collar neighborhood of $\partial N'$ such that $g\circ f$ is isotopic to $f'$. The set $\defspace{M}$ consists of the different projective structures on $M$ and a continuous path in $\defspace{M}$ is a \emph{projective deformation of $M$}.

 As we have seen, a projective manifold $N$ diffeomorphic to $M$ gives rise to an element  $[\rho_N]\in \charvar{\Gamma}$ via its holonomy representation. As a result we have a map $hol:\defspace{M}\to \charvar{\Gamma}$ given by $hol([(f,N)])=[\rho_N\circ f_\ast]$. The following specific instance of a much more general theorem allows us to understand projective deformations in terms of representations. 

\begin{theorem}[{\cite[\S 3]{Goldman87}}]\label{thurston-lok}
The map $hol$ is a local homeomorphism. 
\end{theorem}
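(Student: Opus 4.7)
\textbf{Proof plan for Theorem \ref{thurston-lok}.}

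The plan is to prove separately that $hol$ is locally injective and locally open near a given class $[(f,N)] \in \defspace{M}$, exploiting the standard dictionary between projective structures and equivariant pairs $(D,\rho)$ consisting of a developing map and its holonomy.

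For local injectivity, suppose $(f_1,N_1)$ and $(f_2,N_2)$ are two structures near $(f,N)$ whose holonomies are conjugate by some $g \in \pgl$. Choose developing maps $D_1, D_2 : \tilde M \to \RP^3$ for the two structures, equivariant under the representations $\rho_1$ and $\rho_2 = g\rho_1 g^{-1}$ respectively. Replacing $D_2$ by $g^{-1}\circ D_2$ produces an equivalent structure with holonomy $\rho_1$, so I may assume both developing maps are $\rho_1$-equivariant. Pick a basepoint in $\tilde M$; by adjusting by a further element of $\pgl$ (and a corresponding conjugation, which is the identity since the developing maps are already $\rho_1$-equivariant with matching holonomy) I may additionally assume $D_1$ and $D_2$ have the same germ at the basepoint. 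Uniqueness of analytic continuation of $(\pgl, \RP^3)$-maps then forces $D_1 = D_2$ on all of $\tilde M$, which exactly means $(f_1,N_1)$ and $(f_2,N_2)$ are equivalent in $\defspace{M}$.

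For local openness, choose a finite good cover $\{U_i\}$ of a compact core of $M$ (together with a collar neighborhood of $\partial M$ needed for the equivalence relation defining $\defspace{M}$) by charts $\phi_i : U_i \to \RP^3$ of the given structure. The transition functions $g_{ij} \in \pgl$ satisfy a Čech cocycle condition whose content on the nerve of the cover is precisely that the resulting holonomy of loops agrees with $\rho$. Given a representation $\rho'$ close to $\rho$ in $\rvar{\Gamma}$, I would construct a nearby cocycle $\{g'_{ij}\}$ realizing $\rho'$. Because $\pgl$ is a Lie group acting smoothly on $\RP^3$, and the immersion condition is open in the $C^1$ topology, for $\rho'$ sufficiently close to $\rho$ one can reglue the charts via the perturbed transition data to produce a new real projective manifold $N'$ and a diffeomorphism $f' : M \to N'$ close to $f$; the developing map can then be recovered by analytic continuation along a spanning tree in the nerve, and its equivariance class yields $hol([(f',N')]) = [\rho']$. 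This gives a continuous local section of $hol$.

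The main obstacle is the coherent global construction in the openness step: the perturbations $\phi_i \mapsto \phi_i'$ must be chosen compatibly over every overlap simultaneously, i.e.\ one needs to lift a path of representations in $\rvar{\Gamma}$ to a path of cocycles on the nerve. This is handled by writing a cocycle as a $\pgl$-valued assignment on the finitely many edges of the nerve satisfying relations indexed by triangles, which for a presentation of $\Gamma$ coming from the good cover realize the defining relations of $\Gamma$. Since $\rvar{\Gamma}$ is precisely the algebraic set cut out by these relations inside the space of assignments, any smooth path in $\rvar{\Gamma}$ based at $\rho$ lifts (locally and non-uniquely, but continuously) to a path of cocycles by an application of the implicit function theorem at the finite-dimensional level. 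This local section of cocycle data, combined with openness of the immersion condition, produces the required inverse to $hol$ near $[(f,N)]$, finishing the proof that $hol$ is a local homeomorphism.
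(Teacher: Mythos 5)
The paper offers no proof of this statement; it is quoted from Goldman (the Ehresmann--Thurston--Lok holonomy principle), so your attempt can only be measured against the standard argument in the literature. Your openness step is essentially that standard argument (perturb the transition cocycle of a good cover of a compact core and reglue), although the appeal to the implicit function theorem to lift a path of representations to a path of cocycles is both unnecessary and unjustified: $\rvar{\Gamma}$ may well be singular at $\rho$, so the implicit function theorem need not apply, but no lifting argument is needed anyway --- each $g_{ij}$ is the holonomy of an explicit edge-path in the nerve, so one simply substitutes $\rho'$ for $\rho$ in that formula and the cocycle identities hold automatically because $\rho'$ is a homomorphism.

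The genuine gap is in your injectivity step. Post-composing $D_2$ with $h\in\pgl$ conjugates its holonomy by $h$; once you have arranged that both developing maps are $\rho_1$-equivariant, the only further adjustments available are by elements of the centralizer of $\rho_1(\Gamma)$, which is trivial when $\rho_1$ is irreducible (as the holonomies of interest here are). So you cannot ``additionally assume $D_1$ and $D_2$ have the same germ at the basepoint.'' Indeed, two projective structures on the same manifold can have identical holonomy and genuinely different developing maps (this already happens for $\C P^1$-structures via grafting), so any injectivity argument that never uses the hypothesis that the two structures are close to each other in $\defspace{M}$ must fail; as written, yours would ``prove'' that $hol$ is globally injective, which is false. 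The correct argument uses that $D_1$ and $D_2$ are $C^1$-close on a compact fundamental domain to produce a $\rho_1$-equivariant diffeomorphism $\Phi$ of $\tilde M$, close to the identity, with $D_2=D_1\circ\Phi$; this $\Phi$ descends to the isotopy demanded by the equivalence relation defining $\defspace{M}$.
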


Let $\defspace[sc]{M}$ be the set of classes $[(f,N)]\in \defspace{M}$ such that $N$ is a strictly convex projective manifold. This set is comprised of the different strictly convex projective structures on $M$ and a continuous path in $\defspace[sc]{M}$ is a \emph{strictly convex deformation of $M$}. When $M$ is a finite volume hyperbolic 3-manifold then its peripheral subgroups are all free Abelian groups of rank 2. Thus, as a consequence of Lemma \ref{scparabolic} we have the following proposition about the restriction of $hol$ to $\defspace[sc]{M}$.

\begin{proposition}\label{peripheralholonomy}
If $M$ is a finite volume hyperbolic 3-manifold then $hol(\defspace[sc]{M})\subset \charvarpar{\Gamma}$.  
\end{proposition}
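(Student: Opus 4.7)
The plan is to unwind the definitions and chain together three facts already available in the excerpt: the description of the holonomy of a strictly convex structure, Lemma \ref{scparabolic}, and the Jordan form constraint from Theorem \ref{jnfparabolic}.

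First I would fix a class $[(f,N)] \in \defspace[sc]{M}$ with holonomy $\rho : \Gamma \to \pgl$. Because $N$ is strictly convex, the discussion at the end of section \ref{projective geometry} identifies $N$ with $\Omega/\rho(\Gamma)$ for some strictly convex open $\Omega \subset \RP^3$, and $\rho$ is discrete and faithful with image in $\PGL[]{\Omega}$. To show $[\rho] \in \charvarpar{\Gamma}$, it suffices to show that for every peripheral subgroup $P \leq \Gamma$ and every $1 \neq \gamma \in P$, the element $\rho(\gamma)$ is conjugate to the matrix in \eqref{so31parabolic}.

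Next I would apply Lemma \ref{scparabolic} to the subgroup $\rho(P) \leq \PGL[]{\Omega}$. Since $M$ is a finite volume hyperbolic 3-manifold, $P \cong \Z^2$; the image $\rho(P)$ is a discrete (because $\rho$ is discrete and faithful) free Abelian subgroup of $\PGL[]{\Omega}$ of rank $2$. Lemma \ref{scparabolic} then forces every non-trivial element of $\rho(P)$ to be parabolic.

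Finally I would invoke Theorem \ref{jnfparabolic} to pin down the Jordan form. For each $1 \neq \gamma \in P$, the parabolic $\rho(\gamma)$ acts on a $4$-dimensional space and must have a unique largest Jordan block of odd size at least $3$ with eigenvalue $1$; the only numerical possibility in dimension $4$ is a single $3 \times 3$ block plus a $1 \times 1$ block. As remarked immediately after Theorem \ref{jnfparabolic}, in dimension $3$ the only eigenvalue of such a parabolic is $1$, so the $1 \times 1$ block also has eigenvalue $1$, and $\rho(\gamma)$ is conjugate to the matrix displayed in \eqref{so31parabolic}, i.e.\ $\so{3}$-parabolic. This gives $\rho \in \rvarpar{\Gamma}$, hence $hol([(f,N)]) = [\rho] \in \charvarpar{\Gamma}$. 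There is no real obstacle here beyond assembling the pieces correctly; the main thing to verify carefully is that the hypotheses of Lemma \ref{scparabolic} are met, which uses that rank $2$ cusps in finite volume hyperbolic $3$-manifolds produce genuinely rank $2$ discrete Abelian subgroups under the discrete faithful holonomy of a strictly convex structure.
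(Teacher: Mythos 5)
Your proposal is correct and follows essentially the same route as the paper: the paper derives this proposition directly from Lemma \ref{scparabolic} (peripheral subgroups are rank-$2$ free Abelian, hence their images under the discrete faithful holonomy are parabolic) together with the remark after Theorem \ref{jnfparabolic} that pins down the $\so{3}$-parabolic Jordan form in dimension $3$. The only difference is that you spell out the Jordan-block bookkeeping explicitly, which the paper leaves implicit in its definition of $\rvarpar{\Gamma}$.
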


As a consequence of Theorem \ref{thurston-lok} and Proposition \ref{peripheralholonomy} we have the following corollary.

\begin{corollary}\label{scdef}
Let $M$ be a finite volume hyperbolic 3-manifold. If $\Gamma$ is locally projectively rigid relative to the boundary at $[\rhogeo]$ then there are no strictly convex deformations near the complete hyperbolic structure on $M$.
\end{corollary}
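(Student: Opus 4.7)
The plan is a short formal chase that composes the three tools just assembled: the local homeomorphism of Theorem \ref{thurston-lok}, the containment from Proposition \ref{peripheralholonomy}, and the rigidity hypothesis. Interpreting the conclusion as the statement that the complete hyperbolic structure is isolated in $\defspace[sc]{M}$, everything should follow almost formally.

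First, I would suppose for contradiction that there exists a continuous path $[(f_t, N_t)] \in \defspace[sc]{M}$, defined for $t \in (-\varep,\varep)$, with $[(f_0, N_0)]$ the complete hyperbolic structure class on $M$, and with $[(f_t, N_t)]$ not equal to $[(f_0, N_0)]$ for some $t$ arbitrarily close to $0$. Applying $hol$ produces a continuous path in $\charvar{\Gamma}$ based at $[\rhogeo]$, and Proposition \ref{peripheralholonomy} guarantees that this path in fact lies in the relative character variety $\charvarpar{\Gamma}$, since each $N_t$ is strictly convex and hence has $\so{3}$-parabolic peripheral holonomy.

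Next, the hypothesis that $\Gamma$ is locally projectively rigid relative to the boundary at $[\rhogeo]$ supplies an open neighborhood $U \subset \charvar{\Gamma}$ of $[\rhogeo]$ with $U \cap \charvarpar{\Gamma} = \{[\rhogeo]\}$. By continuity, for $t$ close enough to $0$ the image path sits inside $U$, and therefore equals $[\rhogeo]$ identically on a small interval around $0$. Since Theorem \ref{thurston-lok} makes $hol$ a local homeomorphism at $[(f_0,N_0)]$, a deformation that is eventually constant under $hol$ must itself be eventually constant in $\defspace{M}$, hence in $\defspace[sc]{M}$. This contradicts the assumed non-triviality and finishes the argument.

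There is no substantive obstacle here; the proof really is a three-line diagram chase. The only care required is unpacking the informal phrase ``no strictly convex deformations near the complete hyperbolic structure'' into the precise statement that $[(f_0, N_0)]$ is an isolated point of $\defspace[sc]{M}$, and then applying the local homeomorphism at the correct basepoint so that the preimage of $[\rhogeo]$ in a sufficiently small $\defspace{M}$-neighborhood of $[(f_0, N_0)]$ consists of this single class.
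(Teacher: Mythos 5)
Your argument is correct and is exactly the intended one: the paper offers no separate proof, simply stating the corollary as an immediate consequence of Theorem \ref{thurston-lok} and Proposition \ref{peripheralholonomy}, which is precisely the composition you carry out. The only point worth noting is that injectivity of $hol$ near the base structure (from the local homeomorphism) is what converts ``constant image in $\charvarpar{\Gamma}$'' into ``constant path in $\defspace[sc]{M}$,'' and you handle that correctly.
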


\subsection{Twisted Cohomology and Infinitesimal Deformations}

We now review how the cohomology
of $\Gamma$ with a certain system of local coefficients helps to
infinitesimally parameterize conjugacy classes of deformations of
representations. For more details on cohomology see \cite[Chap III]{Brown82}. 

Let $\sigma_0:\Gamma\to \pgl$ be a representation, then we can define the cochain complex, $C^n(\Gamma,\lieslact{\sigma_0})$ to be the set of all functions from $\Gamma^n$ to $\liesl$. The chain complex is equipped with a differential, $d^n:C^n(\Gamma,\lieslact{\sigma_0})\to C^{n+1}(\Gamma,\lieslact{\rho_0})$, where $d^n\phi(\gamma_1,\ldots, \gamma_{n+1})$ is given by  
$$
\gamma_1\cdot \phi(\gamma_2,\ldots,\gamma_{n+1})+\sum_{i=1}^n(-1)^i\phi(\gamma_1,\ldots, \gamma_{i-1},\gamma_i\gamma_{i+1},\ldots,\gamma_{n+1})+(-1)^{n+1}\phi(\gamma_1,\ldots,\gamma_{n}),
$$
where $\gamma\in \Gamma$ acts via the adjoint action of $\sigma_0$, given by $\gamma\cdot
M=\sigma_0(\gamma)M\sigma_0(\gamma)^{-1}$. Letting $Z^n(\Gamma,\lieslact{\sigma_0})$ and $B^n(\Gamma,\lieslact{\sigma_0})$ be the kernel of $d^n$ and image of $d^{n-1}$, respectively, we can form the cohomology groups $\homol[\ast]{\Gamma}{\lieslact{\sigma_0}}$. When no confusion will arise we often omit the superscript and just write $d$ for the boundary map. 

To see how this construction is related to deformations, let $\sigma_t$ be a deformation of $\sigma_0$, then since $\pgl$ is a Lie group
we can use a series expansion and write $\sigma_t(\gamma)$ as
\begin{equation}\label{seriesexpansion}
 \sigma_t(\gamma)=(I+z(\gamma)t+O(t^2))\sigma_0(\gamma ),
\end{equation}
where $\gamma\in\Gamma$ and $z$ is a map from $\Gamma$ into $\liesl$, which we
call an \emph{infinitesimal deformation of $\sigma_0$}. The above construction would have worked for any smooth function from $\R$ to $\pgl$, but the fact that $\sigma_t$ is a
homomorphism for each $t$ puts strong restrictions on $z$. Let $\gamma$ and
$\gamma'$ be elements of $\Gamma$, then 
\begin{equation}\label{cocyclecond}
 \sigma_t(\gamma\gamma')=(I+z(\gamma\gamma')t+O(t^2))\sigma_0(\gamma\gamma')
\end{equation}
and 
\begin{equation*}
 \sigma_t(\gamma)\sigma_t(\gamma')=(I+z(\gamma)t+O(t^2))\sigma_0(\gamma)(I+z(\gamma')+O(t^2))\sigma_0(\gamma')
\end{equation*}
\begin{equation*}
=(I+(z(\gamma)+\gamma\cdot
z(\gamma'))t+O(t^2))\sigma_0(\gamma\gamma').
\end{equation*}

By focusing on the linear terms of the two power series in \eqref{cocyclecond}, we find that
$z(\gamma\gamma')=z(\gamma)+\gamma\cdot z(\gamma')$, and so $z\in
Z^1(\Gamma,\left.\liesl\right._{\sigma_0})$, the set of 1-cocycles twisted by
the action of $\sigma_0$. 

Since we think of deformations coming from conjugation as uninteresting, we now analyze which elements in
$Z^1(\Gamma,\left.\liesl\right._{\sigma_0})$ arise from these types of
deformations. Let $c_t$ be a smooth curve in $\pgl$ such that $c_0=I$, and
consider the deformation $\sigma_t(\gamma)=c^{-1}_t\sigma_{0}(\gamma)c_t$. Again, when
we write $\sigma_t(\gamma)$ as a power series we find that 
\begin{equation}\label{cobound}
c_t^{-1}\sigma_0(\gamma)c_t=(I-z_ct+O(t^2))\sigma_0(\gamma)(I+z_ct+O(t^2))=(I+(z_c-\gamma\cdot z_c)t+O(t^2))\sigma_0(\gamma),
\end{equation}
and again by looking at linear terms we learn $z(\gamma)=z_c-\gamma\cdot z_c$ and so $z$ is a 1-coboundary. 

We therefore conclude that studying infinitesimal deformations near $\sigma_0$ up
to conjugacy boils down to studying
$\homol{\Gamma}{\left.\liesl\right._{\sigma_0}}$. In the case where
$\homol{\Gamma}{\lieslact{\sigma_0}}=0$ we say that $\Gamma$ is
\emph{infinitesimally projectively rigid at $[\sigma_0]$}, and when the map $\iota^\ast:\homol{\Gamma}{\lieslact{\sigma_0}}\to\homol{\fund{M}}{\lieslact{\sigma_0}}$ induced by the inclusion $\iota:\partial M \to M$ is injective we say that $\Gamma$ is \emph{infinitesimally projectively rigid relative to the boundary at $[\sigma_0]$}. The following theorem of Weil shows the strong relationship between infinitesimal and local
rigidity. 

\begin{theorem}[\cite{Weil64}]\label{infrig}
 If $\Gamma$ is infinitesimally projectively rigid at $[\sigma]$ then $\Gamma$ is locally projectively rigid at $[\sigma]$. 
\end{theorem}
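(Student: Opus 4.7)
The strategy is to convert the infinitesimal rigidity hypothesis, which is a statement about cocycles and coboundaries, into a local statement via an implicit function argument. The key conceptual point is that the tangent space to $\rvar{\Gamma}$ at $\sigma$ is contained in $Z^1(\Gamma,\lieslact{\sigma})$, while the tangent space to the $\pgl$-conjugation orbit through $\sigma$ equals $B^1(\Gamma,\lieslact{\sigma})$. The vanishing of $H^1$ forces these to agree, so the orbit infinitesimally exhausts the variety; the challenge is to upgrade this to a neighborhood-level statement.

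I would set up the proof as follows. Since $\Gamma$ is the fundamental group of a finite volume hyperbolic $3$-manifold, it is finitely presented. Fix a presentation $\Gamma=\langle g_1,\ldots,g_n \mid r_1,\ldots,r_m\rangle$. This identifies $\rvar{\Gamma}$ with the analytic subset of $\pgl^n$ cut out by the word map $W\colon\pgl^n\to\pgl^m$ defined by $W(A_1,\ldots,A_n)=(r_j(A_1,\ldots,A_n))_j$, namely $\rvar{\Gamma}=W^{-1}(I,\ldots,I)$ near $\sigma$. Define the orbit map
\[
\Phi\colon \pgl\longrightarrow\rvar{\Gamma},\qquad \Phi(h)=h\sigma h^{-1}.
\]
A computation completely parallel to \eqref{cobound} shows that $d\Phi_I(X)$ is the $1$-coboundary $\gamma\mapsto X-\Ad(\sigma(\gamma))X$, so $\mathrm{image}(d\Phi_I)=B^1(\Gamma,\lieslact{\sigma})$.

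Next, I would analyze $dW_\sigma$. Repeating the series-expansion argument that produced \eqref{cocyclecond} but for a path in $\pgl^n$ rather than $\rvar{\Gamma}$, one sees that the kernel of $dW_\sigma$ is precisely $Z^1(\Gamma,\lieslact{\sigma})$. In particular, the Zariski tangent space to $\rvar{\Gamma}$ at $\sigma$ sits inside $Z^1$. Now invoke the hypothesis $H^1(\Gamma,\lieslact{\sigma})=0$: this forces $Z^1=B^1$, so $\mathrm{image}(d\Phi_I)=\ker(dW_\sigma)$. Choose a linear complement $\mathfrak{z}^0\subset\liesl$ to the kernel of $d\Phi_I$ (which is $Z^0$, the centralizer Lie algebra). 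The restriction $d\Phi_I|_{\mathfrak{z}^0}$ is then a linear isomorphism onto $\ker(dW_\sigma)$.

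To finish, I would apply the implicit function theorem to the smooth map
\[
\Psi\colon \mathfrak{z}^0\times\R^N\longrightarrow \pgl^n,\qquad (X,v)\longmapsto \exp(X)\sigma\exp(-X)\cdot\eta(v),
\]
where $\eta$ is a local parametrization of a smooth submanifold of $\pgl^n$ transverse to $\ker(dW_\sigma)$ and passing through $\sigma$. By construction $d\Psi_{(0,0)}$ is an isomorphism, so $\Psi$ is a local diffeomorphism. The slice $\{v=0\}$ maps into the conjugation orbit of $\sigma$, and any point of $\rvar{\Gamma}$ near $\sigma$ has $W\circ\Psi(X,v)=I$, which by transversality forces $v=0$. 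Thus every representation near $\sigma$ in $\rvar{\Gamma}$ is conjugate to $\sigma$, so $[\sigma]$ is isolated in $\charvar{\Gamma}$.

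The main obstacle is the last step: translating the coincidence $\mathrm{image}(d\Phi_I)=\ker(dW_\sigma)$ into actual local surjectivity of $\Phi$ onto $\rvar{\Gamma}$. The delicacy is that $\rvar{\Gamma}$ may not be smooth at $\sigma$, so one cannot quote a naive submersion theorem. The remedy is to apply the implicit function theorem to $W$ in $\pgl^n$ rather than to $\Phi$ directly, using a transverse slice as above, and then to compare with the orbit. Alternatively, one can appeal to Artin's approximation theorem, but for a finitely presented group the elementary slice argument suffices and is the cleanest route.
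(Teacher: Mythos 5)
The paper offers no proof of this statement: it is imported wholesale from Weil \cite{Weil64}, so there is nothing in the text to compare your argument against. What you have written is, in substance, Weil's own argument, and it is correct. The identification of $\ker(dW_\sigma)$ with $Z^1(\Gamma,\lieslact{\sigma})$ by differentiating the relators, the identification of the orbit tangent space with $B^1(\Gamma,\lieslact{\sigma})$, and the use of $H^1=0$ to match them are all right, and you correctly isolate the one delicate point, namely that $\rvar{\Gamma}$ may be singular at $\sigma$, so equality of Zariski tangent spaces does not by itself give local equality of the orbit and the variety. Two small repairs to the write-up. First, as written $\Psi(X,v)=\exp(X)\sigma\exp(-X)\cdot\eta(v)$ does not pass through $\sigma$ at $(0,0)$ if $\eta(0)=\sigma$; you want either $\eta(0)=(I,\dots,I)$ with $d\eta_0$ spanning a complement of $\ker(dW_\sigma)$, or else $\Psi(X,v)=\exp(X)\,\eta(v)\,\exp(-X)$. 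Second, ``by transversality forces $v=0$'' is too quick: the map $v\mapsto W(\Psi(X,v))$ is known to be an immersion at $v=0$ only for $X=0$, and an immersion at a single point does not automatically give a neighborhood of injectivity that is uniform in $X$. The clean fix is the one you gesture at in your closing paragraph: post-compose $W\circ\Psi$ with a linear projection of the target chosen so that the $v$-derivative at the origin becomes an isomorphism, apply the implicit function theorem to solve the resulting equation for $v$ as a function of $X$, and use the uniqueness clause together with the identity $W(\Psi(X,0))=(I,\dots,I)$ to conclude that the unique solution sheet is $v\equiv 0$. With that step supplied, every representation near $\sigma$ lies on the conjugation orbit, the orbit is open in $\rvar{\Gamma}$, and $[\sigma]$ is isolated in $\charvar{\Gamma}$, as required.
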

More generally the dimension of $\homol{\Gamma}{\lieslact{\rhogeo}}$ is an upper bound for the dimension of $\charvar{\Gamma}$ at $[\rhogeo]$ (see \cite[Sec 2]{JohnsonMillson87}). However, it is important to remember that in general the character variety need not be a smooth manifold. When this occurs this bound is not sharp and so the converse to Theorem \ref{infrig} is in general false. 

\subsection{Decomposing $\homol{\Gamma}{\lieslact{\rho_0}}$}
  
In order to simplify the study of $\homol{\Gamma}{\lieslact{\rho_0}}$ we will
decompose it into two factors using a decomposition of $\liesl$. Consider the symmetric matrix 
\[
 J=\begin{pmatrix}
    1& 0 & 0 & 0\\
    0 & 1 & 0 & 0 \\
    0 & 0 & 1 & 0\\
    0 & 0 & 0 & -1
   \end{pmatrix}
\]
Following \cite{JohnsonMillson87,PortiHeusener09} we see that this matrix gives rise to
the following decomposition of $\liesl$ as $\rm{PSO}(3,1)$-modules: 
\begin{equation}\label{slsplitting}
 \liesl=\lieso\oplus \mathfrak{v},
\end{equation}
where $\lieso=\{a \in \liesl\mid a^tJ=-Ja\}$ and $v=\{a\in \liesl \mid
a^tJ=Ja\}$. These two spaces are the $\pm 1$-eigenspaces of the
involution $a\mapsto -Ja^tJ$. This splitting of $\liesl$ gives rise to a
splitting of the cohomology groups, namely
\begin{equation}\label{splitting}
 H^\ast(\Gamma,\lieslact{\rho_0})=H^\ast(\Gamma,\liesoact{\rho_0})\oplus
H^\ast(\Gamma,\vact{\rho_0}).
\end{equation}
If $\rhogeo$ is the geometric representation then the first factor of \eqref{splitting} is well understood. By work of Garland \cite{Garland67}, $\homol{M}{\liesoact{\rhogeo}}$ injects into $\homol{\partial M}{\liesoact{\rhogeo}}$. Furthermore, we have
 $\dim{\homol{M}{\liesoact{\rhogeo}}}=2k$, where $k$ is the number of cusps of $M$ (for more details see \cite{Porti97} or \cite[Section 8.8]{Kapovich01}).

Now that we understand the structure of $\homol{\Gamma}{\liesoact{\rho_0}}$ we
turn our attention to the other factor of \eqref{splitting}. The inclusion $\iota:\partial M \to M$ induces a map $\iota^\ast:\homol{M}{\lieslact{\rhogeo}}\to \homol{\partial M}{\lieslact{\rhogeo}}$ on cohomology. We will refer to the kernel of this map as the \emph{$\lieslact{\rhogeo}$-cuspidal cohomology}. In \cite{PortiHeusener09}, Porti and
Heusener analyze the image of this map. The portion that we will use can be
summarized by the following theorem, which can be thought of as a twisted cohomology analogue of the classical half lives/half dies theorem. 
\begin{theorem}[{\cite[Lem 5.3 \& Lem 5.8]{PortiHeusener09}}]\label{homologyimage}
 Let $\rhogeo$ be the geometric representation of a finite volume hyperbolic
3-manifold with k cusps, then
$\rm{dim}(\iota^\ast(\homol{\Gamma}{\vact{\rhogeo}}))$=k. Furthermore, if
$\partial M=\sqcup_{i=1}^k\partial M_i$, then there exists
$\gamma=\sqcup_{i=1}^k \gamma_i$ with $\gamma_i\subset \partial M_i$, such that
$\iota^\ast(\homol{\Gamma}{\vact{\rhogeo}})$ injects into
$\bigoplus_{i=1}^k\homol{\gamma_i}{\vact{\rhogeo}}$.
\end{theorem}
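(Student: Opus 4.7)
The plan is to combine a direct cohomological computation on each peripheral torus with a global duality argument in the spirit of the classical half-lives-half-dies theorem. The key observation enabling duality is that the decomposition $\liesl=\lieso\oplus\vact{}$ in \eqref{slsplitting} is orthogonal with respect to the Killing form $B$ of $\liesl$, since its summands are the $\pm1$-eigenspaces of the involution $a\mapsto -Ja^tJ$, which preserves $B$. Hence $B$ restricts to a nondegenerate $\rhogeo$-invariant symmetric form on $\vact{}$, so $\vact{\rhogeo}$ is self-dual as a local system on $M$. Applying Poincar\'e-Lefschetz duality to the long exact sequence of the pair $(M,\partial M)$ with coefficients in $\vact{\rhogeo}$ then shows that $\iota^\ast\homol{\Gamma}{\vact{\rhogeo}}$ is a Lagrangian subspace of $\homol{\partial M}{\vact{\rhogeo}}$ with respect to the induced cup product pairing; in particular it has exactly half the dimension of $\homol{\partial M}{\vact{\rhogeo}}$.

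Next I would compute $\homol{\partial M_i}{\vact{\rhogeo}}$ directly for each cusp. Since $\pi_1(\partial M_i)\cong\Z^2$ is generated by a meridian $m_i$ and longitude $l_i$, its cohomology with any module $V$ is read off the short cochain complex
\[ V\xrightarrow{d^0}V\oplus V\xrightarrow{d^1}V, \]
with $d^0(v)=((m_i-1)v,(l_i-1)v)$ and $d^1(a,b)=(l_i-1)a-(m_i-1)b$. Choosing a basis of the $9$-dimensional space $\vact{}$ adapted to the common fixed point on $\partial\HH^3$ of the commuting $\so{3}$-parabolics $\rhogeo(m_i),\rhogeo(l_i)$, one reads off $\dim\homol{\partial M_i}{\vact{\rhogeo}}=2$, so that $\dim\homol{\partial M}{\vact{\rhogeo}}=2k$. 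Combined with the Lagrangian bound from the previous paragraph this gives $\dim\iota^\ast\homol{\Gamma}{\vact{\rhogeo}}=k$, which is the first assertion.

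For the second statement, note that the cup product pairing decouples across boundary components, so the Lagrangian image intersects each factor $\homol{\partial M_i}{\vact{\rhogeo}}$ in a $1$-dimensional subspace $L_i$. For any simple closed curve $\gamma\subset\partial M_i$ the restriction $\homol{\partial M_i}{\vact{\rhogeo}}\to\homol{\gamma}{\vact{\rhogeo}}$ has $1$-dimensional kernel, so it suffices to choose $\gamma_i$ whose kernel avoids $L_i$. A generic slope works; one verifies nonvanishing explicitly by evaluating a cocycle representative of a generator of $L_i$ on $\gamma_i$ using the basis of $\vact{}$ from step two. Taking $\gamma=\sqcup_i\gamma_i$ yields injectivity of $\iota^\ast\homol{\Gamma}{\vact{\rhogeo}}\to\bigoplus_i\homol{\gamma_i}{\vact{\rhogeo}}$.

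The main obstacle is the cusp calculation in step two: carefully tracking the adjoint action of two commuting $\so{3}$-parabolics on the nine-dimensional self-dual module $\vact{}$, and extracting enough Jordan structure to pin down $\dim\homol{\partial M_i}{\vact{\rhogeo}}=2$ rather than some larger number. Once this count is in hand, the remainder is formal Poincar\'e-Lefschetz duality plus a straightforward transversality argument to choose the curves $\gamma_i$.
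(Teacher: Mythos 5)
First, a remark on the comparison: the paper does not prove Theorem \ref{homologyimage} at all --- it is imported verbatim from Porti--Heusener --- so the relevant benchmark is their argument. Your overall strategy matches it: the splitting \eqref{slsplitting} is indeed orthogonal for the Killing form (it is the eigenspace decomposition of a Lie algebra involution, which preserves the form), so $\mathfrak{v}$ is a self-dual local system; Poincar\'e--Lefschetz duality then makes $\iota^\ast\homol{\Gamma}{\vact{\rhogeo}}$ a half-dimensional isotropic subspace of $\homol{\partial M}{\vact{\rhogeo}}$; and the cusp computation gives $\dim\homol{\partial M_i}{\vact{\rhogeo}}=2$. Granting that computation (which you correctly identify as the real work --- it reduces to showing $\dim\mathfrak{v}^{\rhogeo(\fund{\partial M_i})}=1$ for a rank-two parabolic group, and the claimed answer is right), the first assertion follows as you say.

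The second assertion is where there is a genuine gap. You claim that because the image is Lagrangian in $\bigoplus_i\homol{\partial M_i}{\vact{\rhogeo}}$ it must meet each summand in a line $L_i$, hence equal $\bigoplus_i L_i$. That does not follow: in $V_1\oplus V_2$ with each $V_i$ a $2$-dimensional symplectic space, the graph of an anti-symplectic isomorphism $V_1\to V_2$ is Lagrangian and meets both factors trivially. So the reduction to ``choose $\gamma_i$ with $\ker(\iota_{\gamma_i}^\ast)\neq L_i$'' collapses. What you actually need is that the $k$-dimensional subspace $\bigoplus_i K_i$, where $K_i=\ker\bigl(\homol{\partial M_i}{\vact{\rhogeo}}\to\homol{\gamma_i}{\vact{\rhogeo}}\bigr)$, can be chosen transverse to the $k$-dimensional image. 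That is a real transversality statement: one must show the locus of bad tuples $(K_1,\ldots,K_k)$ is a \emph{proper} subvariety of the product of projective lines (this requires knowing, e.g., that the image contains no full summand $\homol{\partial M_i}{\vact{\rhogeo}}$, which does follow from isotropy) and that the lines realized by honest slopes are plentiful enough to avoid it. You also assert without justification that every slope yields a restriction map with exactly one-dimensional kernel; that requires exhibiting the explicit cocycles $z_m$, $z_l$ (as in the proof of Lemma \ref{cohomologyaction}) or an equivalent computation, and is tied to the cusp-shape condition (the ``integral multiple of $\pi/3$'' hypothesis in \eqref{boundarystructure1}) that your write-up never confronts. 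The conclusion is reachable, but not by the argument as written.
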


\subsection{Bending} 
Now that we have some upper bounds on the dimensions of our deformations spaces
we want to begin to understand what types of deformations exist near the
geometric representation. The most well know construction of such
deformations is bending along a totally geodesic hypersurface. The goal of this section is to explain the bending construction and prove the following theorem about the effects of bending on the peripheral subgroups.

\begin{theorem}\label{bendingperipheral}
Let $M$ be a finite volume hyperbolic manifold, let $S$ be a totally geodesic hypersurface, and let $[\rho_t]\in \charvar[\rm{PGL}_{n+1}(\R)]{\Gamma}$, obtained by bending along $S$. Then $[\rho_t]$ is contained in $\charvarpar[\rm{PGL}_{n+1}(\R)]{\Gamma}$ if and only if $S$ is closed or each curve dual to the intersection of $\partial M$ and $S$ has zero signed intersection with $S$. 
\end{theorem}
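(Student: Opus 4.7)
The plan is to compute $\rho_t$ explicitly on each peripheral subgroup via the graph-of-groups structure along $S$, and then apply Theorem \ref{jnfparabolic} to characterize when the result remains $\so{3}$-parabolic.

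First, I would recall the bending construction. The hypersurface $S$ gives $\pi_1(M)$ a graph-of-groups decomposition over $\pi_1(S)$, and $\rho_t$ is defined by inserting the one-parameter subgroup $b_t \subset \PGL[n+1]{\R}$ centralizing $\rho_0(\pi_1(S))$. Since $\rho_0(\pi_1(S))$ is Zariski dense in a conjugate of $\so{n-1}$ by Borel density, this centralizer is the one-dimensional group acting trivially on the stabilized subspace $V \subset \R^{n+1}$ and by $e^t$ on the complementary line $V^\perp$; in adapted coordinates, $b_t = \mathrm{diag}(1,\ldots,1,e^t)$.

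Next, I would examine $S$'s interaction with the cusps. If $S$ is closed, then $S \cap \partial M = \emptyset$ and every peripheral loop can be chosen disjoint from $S$, so $\rho_t|_{\pi_1(T_i)} = \rho_0|_{\pi_1(T_i)}$ for each boundary component $T_i$, preserving $\so{3}$-parabolicity. Otherwise, on each $T_i$ meeting $S$, total geodesicity forces $T_i \cap S$ to be a disjoint union of parallel closed Euclidean submanifolds sharing the cusp ideal point $p_i$; in particular the null vector $v_{p_i}$ lies in $V$, so $b_t$ fixes $p_i$. Choose generators of $\pi_1(T_i) \cong \Z^{n-1}$ consisting of a dual curve $m_i$ together with $(n-2)$ generators $\ell_i^{(j)}$ parallel to $T_i \cap S$. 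The $\ell_i^{(j)}$ are homotopic off $S$, so $\rho_t(\ell_i^{(j)}) = \rho_0(\ell_i^{(j)})$ remains parabolic. For $m_i$, I would show
\[
\rho_t(m_i) = \rho_0(m_i) \cdot b_t^{k_i} \quad \text{(up to conjugation)},
\]
where $k_i = m_i \cdot S$ is the signed intersection number. This uses the abelian structure of $\pi_1(T_i)$, the centralizer identity $[b_t, \rho_0(\pi_1(T_i \cap S))] = 1$, and the common fixed point $p_i$ of both $b_t$ and $\rho_0(\pi_1(T_i))$, which together force the bending insertions at successive crossings to commute past the intervening $\rho_0$-segments and combine into a single power of $b_t$ with exponent equal to the signed intersection.

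Finally, working in coordinates adapted to $V \oplus V^\perp$, one computes directly that $\rho_0(m_i) \cdot b_t^{k_i}$ is upper-triangular with diagonal entries $(1, \ldots, 1, e^{k_i t}, 1)$---the nontrivial eigenvalue $e^{k_i t}$ arising on $V^\perp$, all others $1$ since $\rho_0(m_i)$ is unipotent and $b_t^{k_i}$ acts as the identity on $V$. By Theorem \ref{jnfparabolic}, $\rho_t(m_i)$ is $\so{3}$-parabolic only if all eigenvalues equal $1$, forcing $k_i t = 0$; since $t$ varies, this forces $k_i = 0$. Thus $[\rho_t] \in \charvarpar[\rm{PGL}_{n+1}(\R)]{\Gamma}$ if and only if $k_i = 0$ for every $i$ with $T_i \cap S \neq \emptyset$, which is the stated condition.

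The main obstacle is justifying the collapse $\rho_t(m_i) = \rho_0(m_i) \cdot b_t^{k_i}$ up to conjugation. In general, interleaved bending factors do not commute with arbitrary $\rho_0$-segments, so the cancellation must be derived from the specific algebraic structure of the cusp: that $\pi_1(T_i)$ is abelian, fixes $p_i$, and shares a distinguished one-parameter centralizer with $\rho_0(\pi_1(T_i \cap S))$. Carefully implementing this cancellation within the graph-of-groups formalism is the technical heart of the argument.
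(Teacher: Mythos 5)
Your proposal follows essentially the same route as the paper's proof: the closed case is handled by observing that peripheral loops can be taken disjoint from $S$, and the cusped case is reduced to the curve $m_i$ dual to the parallel components of $T_i\cap S$, whose bent holonomy is controlled by the signed intersection number and detected by eigenvalues. Three points of comparison are worth recording. First, the paper begins the non-compact case by passing to a finite cover in which all cusps are tori (citing McReynolds--Reid--Stover); you implicitly assume $\pi_1(T_i)\cong\Z^{n-1}$ from the start. Second, the step you single out as the technical heart --- collapsing the interleaved insertions to a single factor $b_t^{k_i}$ up to conjugacy --- is stronger than what the forward implication requires, and can be bypassed: each inserted factor $\rho_0(g_j)b_t^{\epsilon_j}\rho_0(g_j)^{-1}$ corresponds to a wall whose closure contains the cusp point $p_i$, so it acts on the null line $L_{p_i}$ by the scalar $e^{\lambda t \epsilon_j}$, while $\rho_0(m_i)$ acts there by $1$; hence $\rho_t(m_i)$ preserves $L_{p_i}$ and acts on it by $e^{\lambda t k_i}$ whether or not the factors commute, and since $\det\rho_t(m_i)=1$ this rules out conjugacy to the unipotent normal form whenever $k_i\neq 0$. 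This is precisely what the paper means by ``looking at the eigenvalues of peripheral elements after they have been bent,'' and your coordinates adapted to $V\oplus V^{\perp}$ should be replaced by the flag $L_{p_i}\subset L_{p_i}^{\perp}$, since $\rho_0(m_i)$ does not preserve $V$. Third, the converse direction (zero signed intersection implies the peripheral holonomy remains $\so{n}$-parabolic) is where some cancellation of the interleaved factors genuinely must be proved; the paper asserts this informally (``the bending along these two meridians cancel one another''), so your proposal is no less complete than the published argument on this point, but the gap you identify is real there and is not closed by either treatment as written.
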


Following \cite{JohnsonMillson87}, let $M$ be a finite volume hyperbolic manifold of dimension $n\geq3$ and $\rhogeo$ be its geometric representation. Next, let  $S$ be a properly embedded
totally geodesic hypersurface. Such a hypersurface gives rise to a non-trivial curve in $\charvar[{\PGL[n+1]{\R}}]{\Gamma}$ passing through $\rho_0$ as follows. We begin with a lemma showing that the hypersurface $S$ gives rise to an element of $\liesl[n+1]$ that is invariant under $\Delta=\pi_1(S)$ but not all of $\Gamma=\pi_1(M)$. 

\begin{lemma} \label{semisimple}
Let $M$ and $S$ as above, then there exists a unique 1-dimensional subspace of
$\liesl[n+1]$ that is invariant under the adjoint action of $\rhogeo(\Delta)$. Furthermore this subspace is generated by a conjugate in $PGL_{n+1}(\R)$ of  
\[
\begin{pmatrix}
-n & 0\\
0 & I
\end{pmatrix},
\]
where $I$ is the $n\times n$ identity matrix.
\end{lemma}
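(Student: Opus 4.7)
The plan is to exploit two facts: because $S$ is totally geodesic, $\rhogeo(\Delta)$ preserves a codimension-one Minkowski subspace of $\R^{n+1}$ together with its spacelike orthogonal line; and this Minkowski subspace carries an absolutely irreducible $\mathrm{SO}(n-1,1)$-representation when $n\geq 3$. Combined with Schur's lemma and the traceless condition, this will force the invariant subspace to be one-dimensional and identify its generator.

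First, I would pick a convenient basis. The geodesic hyperplane $\wt{S}\subset \HH^n$ corresponds to the projectivization of an $n$-dimensional subspace $W\subset \R^{n+1}$ on which the ambient Lorentzian form has signature $(n-1,1)$; let $L=W^\perp$ be the spacelike orthogonal line. After conjugating in $\PGL[n+1]{\R}$ one may assume $L=\R e_1$ and $W=\mathrm{span}(e_2,\ldots,e_{n+1})$. Because $\Delta$ stabilizes $\wt{S}$, each element of $\rhogeo(\Delta)$ preserves both $L$ and $W$, acting as $\pm 1$ on $L$ and as an element of $\mathrm{O}(n-1,1)$ on $W$.

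Next, I would write an $\Ad{\rhogeo(\Delta)}$-invariant vector $X\in\liesl[n+1]$ in block form $X=\begin{pmatrix} a & v^T \\ u & B \end{pmatrix}$ relative to $\R^{n+1}=L\oplus W$. The invariance condition decouples into: $u\in W$ and $v\in W^{\ast}$ are fixed vectors for the $\rhogeo(\Delta)$-action on $W$ and its dual; and $B\in \mathrm{End}(W)$ commutes with every element of $\rhogeo(\Delta)|_W$. By the Borel density theorem, $\rhogeo(\Delta)|_W$ is Zariski dense in $\mathrm{SO}(n-1,1)^{0}$, so these constraints must hold for all of $\mathrm{SO}(n-1,1)$.

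Finally, since the standard representation of $\mathrm{SO}(n-1,1)$ on $W\cong\R^n$ is absolutely irreducible for $n\geq 3$ (its complexification is the irreducible standard representation of $\mathrm{SO}(n,\C)$), it admits no invariant vectors, and Schur's lemma over $\R$ forces $B=\lambda I_n$; tracelessness then gives $a=-n\lambda$. Thus the invariant subspace of $\liesl[n+1]$ is one-dimensional, spanned in this basis by $\mathrm{diag}(-n,I_n)$, and reversing the initial conjugation yields the stated form. The main obstacle will be the irreducibility step: ruling out invariant vectors in $W$ and $W^*$, and securing absolute irreducibility over $\R$ so that Schur collapses the commutant to $\R$ rather than a larger division algebra.
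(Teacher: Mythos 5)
Your proposal is correct and follows essentially the same route as the paper: decompose $\R^{n+1}$ into the spacelike line $L$ and the signature-$(n-1,1)$ hyperplane $W$ preserved by $\rhogeo(\Delta)$, write the invariant element in block form, and use irreducibility of $\rhogeo(\Delta)$ on $W$ to kill the off-diagonal blocks and collapse the commutant to scalars, with tracelessness forcing $\mathrm{diag}(-n\lambda,\lambda I)$. The only differences are cosmetic: the paper works with $\exp(tx)$ and commutation at the group level rather than directly with the $\Ad$-invariance of $x$, and you are somewhat more careful in justifying the Schur step (Borel density plus absolute irreducibility of the standard representation, which the paper leaves implicit in the assertion that the centralizer consists of scalar matrices).
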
 

\begin{proof}
$\rhogeo(\Gamma)$ is a subgroup of $PO(n,1)$ (the projective orthogonal group of the form
$x_1^2+x_2^2+\ldots x_n^2-x_{n+1}^2$). Since $S$ is totally geodesic we can
assume, after conjugation, that $\rhogeo(\Delta)$ preserves both the hyperplane where
$x_1=0$ and its orthogonal complement which is generated by $(1,0,\ldots,0)$.
Hence if $A\in \rhogeo(\Delta)$ then 
$$A=\left(
\begin{array}{cc}
1& 0^{T}\\
0 & \tilde A
\end{array}
\right),$$
where $\tilde A\in PO(n-1,1)$ (the projective orthogonal group of the form
$x_2^2+x_3^2+\ldots+x_n^2-x_{n+1}^2$)  and $0\in \R^n$. If $x\in\liesl[n+1]$ is invariant
under the adjoint action of $\rhogeo(\Delta)$ then we know that $B(t)=\exp{(tx)}$ commutes with every  $A\in\rhogeo(\Delta)$.
If we write $B(t)=\left(\begin{array}{cc}b_{11}& b_{12}\\ b_{21}&
b_{22}\end{array}\right),$ where $b_{11}\in\R$, $b_{12}^T,b_{21}\in\R^n$, and
$b_{22}\in \rm{SL}_n(\R)$, then 
$$\left(
\begin{array}{cc}
b_{11} & b_{12}\\
\tilde Ab_{21} & \tilde Ab_{22}
\end{array}
\right)
=\left(\begin{array}{cc}\
1 & 0\\
0 & \tilde A
\end{array}\right)
\left(
\begin{array}{cc}
b_{11} & b_{12}\\
b_{21} & b_{22}
\end{array}
\right)=
\left(
\begin{array}{cc}
b_{11} & b_{12}\\
b_{21} & b_{22}
\end{array}
\right)
\left(\begin{array}{cc}\
1 & 0\\
0 & \tilde A
\end{array}
\right)=
\left(
\begin{array}{cc}
b_{11} & b_{12}\tilde A\\
b_{21} & b_{22}\tilde A
\end{array}
\right)$$
From this computation we learn that the vectors $b_{12}$ and $b_{21}$ are invariants under
$\rhogeo(\Delta)$ and that $b_{22}$ is in the centralizer of $\rhogeo(\Delta)$ in $SL_{n}(\R)$.
However, $\rhogeo(\Delta)$ is an irreducible subgroup of $PO(n-1,1)$, and so
the only matrices that commute with every element of $\rhogeo(\Delta)$ are scalar
matrices and the only invariant vector of $\rhogeo(\Delta)$ is 0, and so 
$$B=\left(
\begin{array}{cc}
e^{-n \lambda t} & 0\\
0 & e^{ \lambda t} I
\end{array}
\right)$$
where $I$ is the identity matrix. Differentiating $B(t)$ at $t=0$ we find that 
$$x=\left(
\begin{array}{cc}
-n \lambda & 0\\
0&  \lambda I
\end{array}
\right)$$
and the result follows.  
  
\end{proof}

A generator of the $\rhogeo(\Delta)$-invariant subspace of $\liesl[n+1]$ constructed in Lemma \ref{semisimple} will be called a \emph{bending cocycle}. We choose such a generator once and for all and denote it $x_S$. We can now define a family of deformations of $\rhogeo$. The construction breaks into two cases depending on whether or not $S$ is separating.

If $S$ is separating then $\Gamma$ splits as the following amalgamated free product:
 $$\Gamma\cong \Gamma_1\ast_{\Delta}\Gamma_2,$$
where $\Gamma_i$ are the fundamental groups of the components of the complement
of $S$ in $M$. Since $\rhogeo(\Gamma)$ is irreducible we know that $x_S$ is not invariant under all of $\rhogeo(\Gamma)$ and so we can assume without loss
of generality that it is not invariant under $\Gamma_2$. So let
$\rho_t|_{\Gamma_1}=\rhogeo$ and $\rho_t|_{\Gamma_2}=\Ad{\exp{(tx_S)}}\cdot\rhogeo$.
Since these two maps agree on $\Delta$ they give a well defined family of
homomorphisms of $\Gamma$, such that $\rho_0=\rhogeo$. 

If $S$ is nonseparating, then $\Gamma$ is realized as the following HNN extension:
$$\Gamma\cong \Gamma'\ast_\Delta,$$
where $\Gamma'$ is the fundamental group of $M\bs S$. If we let $\alpha$ be a
curve dual to $S$ then we can define a family of homomorphisms through $\rhogeo$ by
$\rho_t|_{\Gamma'}=\rhogeo$ and $\rho_t(\alpha)=\exp{(tx_S)}\rhogeo(\alpha)$. Since
$x_S$ is invariant under $\rhogeo(\Delta)$ the values of $\rho_t(\iota_1(\Delta))$ do not
depend on $t$, where $\iota_1$ is the inclusion of the positive boundary
component of a regular neighborhood of $S$ into $M\bs S$, and so we have well
defined homomorphisms of the HNN extension. 

In both cases $\rho_t$ gives rise to a non-trivial curve of representations and by examining the class of $[\rho_t] \in \homol{\Gamma}{\liesl[n+1]_{\rhogeo}}$,
Johnson and Millson \cite{JohnsonMillson87} showed that $[\rho_t]$ actually
defines a non-trivial path in $\charvar[\rm{PGL}_{n+1}(\R)]{\Gamma}$.

We can now prove Theorem \ref{bendingperipheral}. 

\begin{proof}[Proof of Theorem \ref{bendingperipheral}]
 
In the case that $S$ is closed and separating a peripheral element, $\gamma\in\Gamma$, is contained in either $\Gamma_1$ or $\Gamma_2$ since it is disjoint from $S$.  In this case
$\rho_t(\gamma)$ is either $\rho(\gamma)$ or some conjugate of $\rho(\gamma)$.
In either case we have not changed the conjugacy class of any peripheral
elements and so $[\rho_t]$ is a curve in
$\charvarpar[\rm{PGL}_{n+1}(\R)]{\Gamma}$. Similarly if $S$ is closed and
non-separating then if $\gamma\in \Gamma$ is peripheral we see that $\gamma\in \pi_1(M\bs S)$, and so its
conjugacy class does not depend on $t$. 

In the case that $S$ is non-compact we need to analyze its intersection with the boundary of $M$ more carefully. First, let $M'$ be a finite sheeted cover of $M$ and $S'$ a lift of $S$ in $M'$. If bending along $S$ preserves the peripheral structure of $M$ then bending along $S'$ will preserve the peripheral structure of $M'$. Therefore, without loss of generality we may pass to a finite sheeted cover of $M$ such that $\partial M'=\sqcup_{i=1}^kT_i$, where each of the $T_i$ is a torus (this is possible by work of \cite{McReynoldsReidStover12}).  By looking in the universal cover, it is easy to see that since $S$ is properly embedded and totally geodesic that for any boundary component $T_i$, $T_i\cap S=\sqcup_{j=1}^l t_j$, where the $t_j's$ are parallel $n-2$ dimensional tori. The result of the bending deformation on the boundary will be to simultaneously bend along all of these parallel tori, however some of the parallel tori may bend in opposite directions and can sometimes cancel with 
one 
another. In fact, if we look at a curve $\alpha \in T_i$ 
dual to one (hence all) of the parallel tori, then intersection points of $\alpha$ with $S$ are in bijective correspondence with the $t_j's$, and the direction of the bending corresponds to the signed intersection number of $\alpha$ with $S$. 

When $S$ is separating the signed intersection number of $\alpha$ with $S$ is always zero, and so bending along $S$ has no effect on the boundary. When $S$ is non-separating then the signed intersection can be either zero of non-zero. In the case where the intersection number is non-zero the boundary becomes non-parabolic after bending. This can be seen easily by looking at the eigenvalues of peripheral elements after they have been bent.

\end{proof}  

\begin{remark}
 The Whitehead link is an example where bending along a non-separating totally geodesic surface has no effect on the peripheral subgroup of one of the components (See section \ref{whiteheadsection})
\end{remark}

\begin{corollary}\label{nototgeosurf}
 Let $M$ be a finite volume hyperbolic 3-manifold. If $M$ is locally projectively rigid relative the boundary near the geometric representation then $M$ contains no closed embedded totally geodesic surfaces or finite volume embedded separating totally geodesic surfaces. 
\end{corollary}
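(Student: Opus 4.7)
The plan is to prove the contrapositive by exhibiting explicit non-trivial deformations in $\charvarpar{\Gamma}$ via the bending construction, directly using Theorem \ref{bendingperipheral}. Suppose for contradiction that $M$ contains either a closed embedded totally geodesic surface $S$ or a finite volume embedded separating totally geodesic surface $S$. In either case the Johnson--Millson construction recalled earlier in this section produces a non-trivial analytic path $[\rho_t]\in\charvar{\Gamma}$ with $[\rho_0]=[\rhogeo]$, whose non-triviality in the character variety was shown by Johnson and Millson via the cohomology class of the associated bending cocycle $x_S$.

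Next I would verify that the path $[\rho_t]$ actually lies in the relative character variety $\charvarpar{\Gamma}$, by checking the criterion of Theorem \ref{bendingperipheral}. In the closed case, there is nothing to check: the first clause of the biconditional applies directly, and in fact the peripheral conjugacy classes are literally unchanged by the bending (each cusp subgroup lies in a single vertex group of the splitting $\Gamma_1\ast_\Delta\Gamma_2$ or in $\Gamma'$ of the HNN extension $\Gamma'\ast_\Delta$). In the finite-volume separating case, any loop $\alpha$ on $\partial M$ dual to a component of $S\cap\partial M$ has zero signed intersection with $S$, because $S$ separates $M$ and hence $[S]=0$ in $H_2(M,\partial M;\Z)$ implies that the algebraic intersection number of $\alpha$ with $S$ vanishes; this is exactly the second clause of Theorem \ref{bendingperipheral}, so the path again lies in $\charvarpar{\Gamma}$.

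Combining these two steps gives a non-trivial path through $[\rhogeo]$ in $\charvarpar{\Gamma}$, contradicting the assumption that $M$ is locally projectively rigid relative to the boundary at $[\rhogeo]$. There is no genuine obstacle here: all substantive work has been done in Theorem \ref{bendingperipheral} and in the Johnson--Millson non-triviality result, and the only mild point to be careful about is the algebraic-intersection argument in the separating case, which is a standard consequence of Poincar\'e--Lefschetz duality for the pair $(M,\partial M)$.
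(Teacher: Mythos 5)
Your proposal is correct and follows essentially the same route the paper intends: the corollary is stated as an immediate consequence of Theorem \ref{bendingperipheral} together with the Johnson--Millson non-triviality of the bending path, and your two case checks (closed surfaces leave peripheral conjugacy classes untouched; separating surfaces have zero algebraic intersection with every dual peripheral curve, which is exactly the second clause of the theorem) match the reasoning already carried out in the paper's proof of that theorem. The contradiction with local projective rigidity relative to the boundary, i.e.\ with $[\rhogeo]$ being isolated in $\charvarpar{\Gamma}$, is exactly as you describe.
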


The Menasco-Reid conjecture \cite{MenascoReid92} asserts that hyperbolic knot complements do not contain closed, totally geodesic surfaces. As a consequence of Corollary \ref{nototgeosurf} we see that any hyperbolic knot complement that is locally projectively rigid relative to the boundary near the geometric representation will satisfy the conclusion of the Menasco-Reid conjecture.

\section{Some Normal Forms} \label{normal forms}

The goal of this section will be to examine various normal forms into which two non-commuting $\so{3}$-parabolic elements can be placed. In \cite{Riley72}, Riley shows how two non-commuting parabolic elements $a$ and $b$ inside of $\slc$ can be simultaneously conjugated into the following form:
\begin{equation}\label{slcnormalform}
 a=\begin{pmatrix}
    1 & 1\\
    0 & 1
   \end{pmatrix}, \hspace{.25 in }
 b=\begin{pmatrix}
    1 & 0\\
    \om & 1
   \end{pmatrix}.
\end{equation}
In this same spirit we would like to take two $\so{3}$-parabolic elements $A$ and $B$ that are
sufficiently close to the $\so{3}$-parabolic elements $A_0$ and $B_0$ in \emph{the same} copy of  $\so{3}$ and show that $A$ and $B$ can be simultaneously conjugated into the following normal form, similar to \eqref{slcnormalform}.  
\begin{equation}\label{normform1}
A=\parens{\begin{array}{cccc}
1 & 0 & 2 & 1+a_{14}\\
0 & 1 & 2 & 1 \\
0 & 0 & 1 & 1 \\
0 & 0 & 0 & 1
\end{array}
} \hspace{.25 in}
B=\parens{\begin{array}{cccc}
1 & 0 & 0 & 0\\
b_{21} & 1 & 0 & 0\\
b_{31}+b_{21} b_{32} & 2b_{32} & 1 & 0\\
b_{21}+b_{41} & 2 & 0 & 1
\end{array}
}
\end{equation} 
Our goal will be to build a homomorphism from $\slc$ to $\PGL[4]{\R}$ that maps a pair of parabolic elements of $\slc$ of the form \eqref{slcnormalform} to a pair of elements of $\pgl$ of the form \eqref{normform1}. 

Let $b_{32}$ and $b_{21}$ be real numbers, let $d=b_{21}-4b_{32}^2$, and let $(x,y,z,t)$ be coordinates for $\R^4$. Let $Herm_2$ be the real vector space of $2\times 2$ Hermitian matrices. Then there is a linear isomorphism from $\R^4$ to $Herm_2$ given by 
\begin{equation}\label{r4herm}(x,y,z,t)\mapsto
\end{equation}
$$\parens{\begin{array}{cc}
 \mbox{\footnotesize $x$} & \mbox{\footnotesize $x-y+2b_{32}z-2b_{32}^2t+i(\sqrt{d}z-b_{32}\sqrt{d}t)$}\\
\mbox{\footnotesize $x-y+2b_{32}z-2b_{32}^2t-i(\sqrt{d}z-b_{32}\sqrt{d}t)$} & \mbox{\footnotesize $dt$}
\end{array}}
$$

Furthermore there is an action of $\text{SL}_2(\C)$ on $Herm_2$ (and thus on $\R^4$) by linear automorphism given by $M\cdot N=MNM^\ast$, where $M\in \text{SL}_2(\C)$, $N\in Herm_2$, and $\ast$ denotes the conjugate transpose operator. It is easy to see that the kernel of this action is $\{\pm I\}$ and as a result we get homomorphism $\phi':\slc\to \PGL[4]{\R}$.

The function $-Det$ gives us a quadratic form on $Herm_2$ which when regarded as a quadratic form on $\R^4$ via \eqref{r4herm} is induced by the matrix

\begin{equation}\label{quadform}
X=\begin{pmatrix}
1 & -1 & 2b_{32} & -b_{21}/2\\
-1 & 1 & -2b_{32} & 2b_{32}^2\\
2b_{32} & -2b_{32} & b_{21} & -b_{21}b_{32}\\
-b_{21}/2 & 2b_{32}^2 & -b_{21}b_{32} & b_{21}b_{32}^2
	\end{pmatrix}
\end{equation}
By construction, the image of $\phi'$ preserves this form and when $d>0$ we see that this form has signature $(3,1)$. A simple computation shows that 

$$\phi'\parens{\begin{pmatrix}
1 & i\sqrt{d}\\
0 & 1
\end{pmatrix}}=
\begin{pmatrix}
1 & 0 & 2 & 1-2b_{32}\\
0 & 1 & 2 & 1\\
0 & 0 & 1 & 1\\
0 & 0 & 0 & 1
\end{pmatrix}$$

By pre-composing with conjugation by an appropriate diagonal element in $\slc$ we get a new homomorphism $\phi:\slc\to \PGL[4]{\R}$ such that 

$$\phi\parens{\begin{pmatrix}
1 & 1\\
0 & 1
\end{pmatrix}}=
\begin{pmatrix}
1 & 0 & 2 & 1-2b_{32}\\
0 & 1 & 2 & 1\\
0 & 0 & 1 & 1\\
0 & 0 & 0 & 1
\end{pmatrix}$$

When $b_{21}=\abs{\omega}^2$ and $b_{32}=Re(\omega)/2$ we see that $\phi$ will take pairs of the form \eqref{slcnormalform} to pairs of the form \eqref{normform1} (in this case $a_{13}=-Re(\omega)$, $b_{31}=0$ and $b_{41}=-2$). 

With these
assumptions on $b_{21}$ and $b_{32}$ we see that
\[
 d=b_{21}-4b_{32}^2=\abs{\om}^2-\rm{Re}(\om)^2=\rm{Im}(\om)^2.
\]
Thus as long as $\rm{Im}(\om)\neq 0$, $\phi(a)$ and $\phi(b)$ will preserve a common form of signature $(3,1)$.

Let $F_2=\langle \alpha,\beta \rangle$ be the free
group on two letters and let $\rvarpar{F_2}$ be the set of homomorphisms, $f$,
from $F_2$ to $\pgl$  such that $f(\alpha)$ and $f(\beta)$ are $\so{3}$-parabolic elements. The topology of $\rvarpar{F_2}$ is induced by convergence of the
generators. There is a natural action of $\pgl$ on $\rvarpar{F_2}$ by
conjugation, and we denote the quotient of this action by $\charvarpar{F_2}$. 
We have the following lemma about the local structure of $\charvarpar{F_2}$. 
   
\begin{lemma}\label{normformlemma1}
Let $f_0\in \rvarpar{F_2}$ satisfy the following conditions:
\begin{enumerate}
 \item $\langle f_0(\alpha),f_0(\beta)\rangle$ is irreducible and conjugate into $\so{3}$.
 \item $\langle f_0(\alpha),f_0(\beta)\rangle$ is not conjugate into $\so{2}$.
\end{enumerate}
Then for $f\in\rvarpar{F_2}$ sufficiently close to $f_0$  there exists a unique (up to $\pm I$)
element $G_f\in\PGL[4]{\R}$ such that

$$
G_f^{-1}f(\alpha)G_f=\parens{\begin{array}{cccc}
1 & 0 & 2 & 1+a_{14}\\
0 & 1 & 2 & 1 \\
0 & 0 & 1 & 1 \\
0 & 0 & 0 & 1
\end{array}
}
\hspace{.25 in}
G_f^{-1}f(\beta)G_f=\parens{\begin{array}{cccc}
1 & 0 & 0 & 0\\
b_{21} & 1 & 0 & 0\\
b_{31}+b_{21} b_{32} & 2b_{32} & 1 & 0\\
b_{21}+b_{41} & 2 & 0 & 1
\end{array}
}
$$
Additionally, the map from $\rvarpar{F_2}$ to itself given by  $f\mapsto G_f^{-1}fG_f$ is continuous. 
\end{lemma}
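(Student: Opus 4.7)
The plan is to split the argument into two stages: first construct $G_{f_0}$ explicitly at the base point using the homomorphism $\phi$ built above, and then propagate to nearby $f$ via the inverse function theorem. At the base point, hypothesis (1) lets us conjugate the image of $f_0$ into $\so{3}$, and under the standard isomorphism $\so{3} \cong {\rm PSO}(3,1) \cong \slc$ the elements $f_0(\alpha), f_0(\beta)$ lift to a pair of non-commuting parabolics $(a_0, b_0)$ in ${\rm SL}_2(\C)$. Riley's theorem puts them in the form \eqref{slcnormalform} after a simultaneous conjugation, producing some $\omega_0 \in \C$; hypothesis (2) (not conjugate into $\so{2}$) translates into ${\rm Im}(\omega_0) \neq 0$. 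Taking $b_{21} = |\omega_0|^2$ and $b_{32} = {\rm Re}(\omega_0)/2$ gives $d = {\rm Im}(\omega_0)^2 > 0$, so the corresponding $\phi$ maps $(a_0, b_0)$ into a pair of the shape \eqref{normform1}, and the composition of all of the conjugations supplies $G_{f_0}$.

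To extend to $f$ close to $f_0$, I would consider the smooth map
\begin{equation*}
\Psi\colon \pgl \times \R^5 \longrightarrow \pgl \times \pgl, \qquad (G,p) \longmapsto (G A(p) G^{-1},\ G B(p) G^{-1}),
\end{equation*}
where $p = (a_{14}, b_{21}, b_{31}, b_{32}, b_{41})$ parametrizes the normal-form pairs \eqref{normform1}. The image visibly lies in $\rvarpar{F_2}$. A Jordan-block calculation shows that the centralizer of an $\so{3}$-parabolic in $\pgl$ is $5$-dimensional, hence each of the two conjugacy classes in $\rvarpar{F_2}$ is $10$-dimensional and $\dim \rvarpar{F_2} = 20$ near the irreducible point $f_0$, matching $\dim(\pgl \times \R^5) = 20$. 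Once one verifies that $d\Psi$ is injective at the preimage of $f_0$, the inverse function theorem promotes $\Psi$ to a local diffeomorphism there; inverting $\Psi$ then produces both $G_f$ and the continuity of $f \mapsto G_f^{-1} f G_f$ claimed in the lemma. The ``$\pm I$'' ambiguity corresponds to the two lifts of $G_f \in \pgl$ to $\slpm[4]{\R}$.

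The injectivity of $d\Psi$ is the crux and the main obstacle I foresee. A tangent vector has the form $(X, \delta p) \in \liesl[4] \oplus \R^5$ with image
\begin{equation*}
d\Psi(X, \delta p) = \bigl([X, A_0] + \delta A,\ [X, B_0] + \delta B\bigr),
\end{equation*}
where $(\delta A, \delta B)$ ranges over the $5$-dimensional slice tangent space (one free entry of $\delta A$ and four of $\delta B$; that the parametrization $p \mapsto (A(p), B(p))$ has injective differential is an easy $4 \times 4$ determinant check). Setting $d\Psi(X, \delta p) = 0$ asks the $\pgl$-infinitesimal action of $X$ on $(A_0, B_0)$ to land in the slice. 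Unpacking the commutators entrywise, the rigid zero pattern of a slice tangent vector combined with the explicit shapes of $A_0$ and $B_0$ forces $X$ to commute with both $A_0$ and $B_0$; then by hypothesis (1) and Schur's lemma the joint centralizer of $A_0, B_0$ is trivial in $\pgl$, so $X = 0$, and finally $\delta A = \delta B = 0$ gives $\delta p = 0$. The normal form \eqref{normform1} has in effect been engineered through the construction of $\phi$ so that this transversality of the slice and the $\pgl$-orbit holds.
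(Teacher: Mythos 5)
Your base-point step (conjugating $f_0$ into $\so{3}$, passing to $\slc$, applying Riley's normal form \eqref{slcnormalform}, and pushing through $\phi$ to land in \eqref{normform1}, with hypothesis (2) giving ${\rm Im}(\omega_0)\neq 0$) is exactly the paper's setup. Where you diverge is the extension to nearby $f$: the paper does not use the inverse function theorem. It constructs $G_f$ directly and canonically: since $f(\alpha)$ and $f(\beta)$ are $\so{3}$-parabolics, their $1$-eigenspaces $E_A,E_B$ are $2$-dimensional, and irreducibility (an open condition) gives $\R^4=E_A\oplus E_B$; in an adapted basis the pair takes a block-triangular form, the non-diagonalizable $2\times 2$ blocks are normalized, and the residual block-diagonal conjugation freedom (your five parameters $u_{ij}$) is pinned down by an explicit solve, which is where uniqueness and continuity come from. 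Your route is viable in principle and has the virtue of making the ``sufficiently close to $f_0$'' hypothesis appear naturally through the IFT, and your dimension counts ($5$-dimensional centralizer of an $\so{3}$-parabolic in $\pgl$, hence $\dim\rvarpar{F_2}=20=\dim(\pgl\times\R^5)$) are correct.

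However, the crux of your argument --- injectivity of $d\Psi$ --- is asserted rather than proved, and the mechanism you sketch for it does not work as stated. Every pair $(A(p),B(p))$ in the slice consists of $\so{3}$-parabolics, so the entire slice lies inside the product of the two conjugacy classes; consequently each slice direction is itself tangent to the corresponding $\pgl$-orbit (e.g.\ $\partial A/\partial a_{14}=E_{14}$ equals $[Y,A_0]$ for some $Y\in\liesl[4]$). Hence the kernel equations $[X,A_0]+\delta A=0$ and $[X,B_0]+\delta B=0$ each admit nonzero solutions separately, and one cannot conclude componentwise that $X$ commutes with $A_0$ and with $B_0$ before invoking Schur's lemma. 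What must be shown is that the two affine solution sets meet only in the centralizer of the pair --- a genuinely joint, entrywise computation that you have not carried out (the paper's analogue is the ``tedious computation'' solving for the $u_{ij}$, which it also does not print, but that computation simultaneously yields existence, uniqueness, and continuity). A second, smaller gap: the IFT only gives uniqueness of $G_f$ in a neighborhood of the identity, whereas the lemma asserts uniqueness outright; the paper's construction gets this for free because the decomposition $\R^4=E_A\oplus E_B$ and the subsequent normalizations are forced on \emph{any} conjugator into the form \eqref{normform1}. To complete your proof you would need to (i) actually verify $\ker d\Psi=0$ by the entrywise computation, and (ii) supplement the IFT with an argument that any two conjugators into normal form differ by an element of the (trivial) joint centralizer.
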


\begin{proof}
 The previous argument combined with properties 1 and 2 ensure that $f_0(\alpha)$ and $f_0(\beta)$ can be put into the form \eqref{normform1}. Let $A=f(\alpha)$ and $B=f(\beta)$. Let $E_A$ and $E_B$ be the 1-eigenspaces of
$A$ and $B$, respectively. Since both $A$ and $B$ are $\so{3}$-parabolics both
of these spaces are 2-dimensional. Irreducibility is an open condition, and so we can assume that $f$ is also irreducible and so $E_A$ and $E_B$ have trivial intersection. Therefore $\R^4=E_A\oplus E_B$. If we
select a basis with respect to this decomposition then our matrices will be of
the following block form.
\[
 \begin{pmatrix}
  I & A_U\\
  0 & A_L
 \end{pmatrix}\hspace{.25 in}
 \begin{pmatrix}
  B_U & 0\\
  B_L & I
 \end{pmatrix}
\]
Observe that 1 is the only eigenvalue of $A_L$ (resp.\ $B_U$) and that neither of
these matrices is diagonalizable (otherwise $(A-I)^2=0$ (resp.\ $(B-I)^2=0$) and so $A$ (resp.\ B) would not have the right Jordan form). Thus we can further conjugate $E_A$ and $E_B$ so that
\[
 A_L=\begin{pmatrix}
      1 & a_{34}\\
      0 & 1
     \end{pmatrix} \hspace{.25 in}
 B_U=\begin{pmatrix}
      1 & 0\\
      b_{21} & 1
     \end{pmatrix}
\]
where $a_{34}\neq 0\neq b_{21}$. Conjugacies that preserve this block form are all of
the form
\[
 \begin{pmatrix}
  u_{11} & 0 & 0 & 0\\
  u_{21} & u_{22} & 0 & 0\\
  0 & 0 & u_{33} & u_{34}\\
  0 & 0 & 0 & u_{44}
 \end{pmatrix}
\]
Finally, a tedious computation\footnote{This computation is greatly expedited by
using Mathematica.} allows us to determine that there exist unique values of the $u_{ij}s$
that will finish putting our matrices in the desired normal form. Note that the
existence of solutions depends on the fact that the entries of $A$ and $B$ are
close to the entries of $f_0(\alpha)$ and $f_0(\beta)$, which live in $\so{3}$. Finally, the entries of all the conjugating matrices are continuous functions of the entries of $A$ and $B$ and so the map $f\mapsto G_f^{-1}fG_f$ is also continuous.    
\end{proof}

Lemma \ref{normformlemma1} gives us the following information about the local dimension of $\charvarpar{F_2}$.
\begin{corollary}\label{charvardim}
 If $\sigma\in \rvarpar{F_2}$ satisfies the hypotheses of Lemma \ref{normformlemma1} then the space $\charvarpar{F_2}$ is locally 5-dimensional at $[\sigma]$.
\end{corollary}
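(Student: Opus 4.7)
The plan is to use Lemma \ref{normformlemma1} to build a local homeomorphism between a neighborhood of $[\sigma]$ in $\charvarpar{F_2}$ and an open subset of $\R^5$, where the coordinates are the five free parameters $(a_{14}, b_{21}, b_{31}, b_{32}, b_{41})$ appearing in the normal form \eqref{normform1}. Once such a homeomorphism is produced, local $5$-dimensionality is immediate.

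The first step is to define the forward map. For $f$ in a small neighborhood $U$ of $\sigma$ in $\rvarpar{F_2}$, Lemma \ref{normformlemma1} supplies a conjugating element $G_f$ so that $G_f^{-1} f G_f$ has the form \eqref{normform1}; set $\Phi(f) := (a_{14}, b_{21}, b_{31}, b_{32}, b_{41})$. Continuity of $\Phi$ follows from the final assertion of the lemma. The uniqueness of the normal form ensures that $\Phi$ is constant on $\pgl$-orbits, so it descends to a continuous map $\bar\Phi$ on the quotient. The same uniqueness gives injectivity: if $\Phi(f) = \Phi(f')$ then $f$ and $f'$ are each conjugate to the same standard pair, hence conjugate to each other.

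The second step is to construct a continuous inverse. Given a tuple $(a_{14}', b_{21}', b_{31}', b_{32}', b_{41}')$ close to $\Phi(\sigma)$, substitute into \eqref{normform1} to produce a pair $(A,B)$ of matrices, and perform a direct Jordan-form check to verify that $(A-I)^3 = 0$ while $(A-I)^2 \neq 0$, and the analogue for $B$ (using $b_{21}' \neq 0$, which holds near $\sigma$ since $f_0(\beta)$ is $\so{3}$-parabolic and the block-form argument inside the proof of Lemma \ref{normformlemma1} forces $b_{21} \neq 0$ at $\sigma$). Both matrices are therefore $\so{3}$-parabolic, so the pair lies in $\rvarpar{F_2}$; composing with the projection to $\charvarpar{F_2}$ yields a continuous right inverse to $\bar\Phi$ defined on an open neighborhood of $\Phi(\sigma)$ in $\R^5$.

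There is no deep obstacle: the geometric content is entirely absorbed by the previous normal form lemma, and what remains is a standard cross-section argument. The one piece of bookkeeping to watch is that the neighborhoods in $\rvarpar{F_2}$ and in $\R^5$ are chosen small enough that $b_{21}$ stays away from $0$ along the inverse map (preserving the $\so{3}$-parabolic Jordan structure) and that $\bar\Phi$ and its inverse really are mutually inverse on matching neighborhoods. Once this is arranged, $\bar\Phi$ is a local homeomorphism onto an open subset of $\R^5$, which proves that $\charvarpar{F_2}$ is locally $5$-dimensional at $[\sigma]$.
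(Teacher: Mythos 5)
Your proposal is correct and is exactly the argument the paper intends: the corollary is stated as an immediate consequence of Lemma \ref{normformlemma1}, with the five free entries $(a_{14},b_{21},b_{31},b_{32},b_{41})$ of the normal form \eqref{normform1} serving as local coordinates on $\charvarpar{F_2}$ near $[\sigma]$, well defined and injective by the uniqueness of $G_f$ and surjective onto a neighborhood because any nearby parameter values with $b_{21}\neq 0$ yield a pair of $\so{3}$-parabolics. Your write-up just makes explicit the point-set bookkeeping that the paper leaves implicit.
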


We conclude this section by introducing another normal form for $\so{3}$-parabolics. If we begin with $A$ and $B$ in form \eqref{normform1} then we can
conjugate by the matrix
\[
 V=\begin{pmatrix}
  1 & 0 & 0 & 0\\
  \frac{2-b_{21}-b_{42}}{4} & \frac{2+b_{21}+b_{41}}{4} & 0 & 0 \\
  0 & 0 & \frac{1}{2} & \frac{2b_{32}+b_{21}b_{32}+b_{32}b_{41}-1}{2}\\
  0 & 0 & 0 & \frac{2+b_{21}+b_{41}}{2}
 \end{pmatrix}
\]
and the resulting form will be 
\begin{equation}\label{normform2}
 A=\begin{pmatrix}
  1 & 0 & 1 & a_{14}\\
  0 & 1 & 1 & a_{24}\\
  0 & 0 & 1 & a_{34}\\
  0 & 0 & 0 & 1
 \end{pmatrix},\hspace{.25 in}
B=\begin{pmatrix}
   1 & 0 & 0 & 0\\
   b_{21} & 1 & 0 & 0\\
   b_{31} & 1 & 1 & 0\\
   1 & 1 & 0 & 1
  \end{pmatrix}
\end{equation}
Conjugation by $V$ makes sense because near $f_0$, $2+b_{21}+b_{41}\neq 0$, and so $V$ is non-singular. 

\section{Two Bridge Examples}\label{examples}

In this section we will examine the deformations of various two-bridge knots and links. Two bridge knots and links always admit presentations of a particularly nice form. A two-bridge knot or link, $K$ is determined by a rational number $p/q$, where $q$ is odd, relatively prime to $p$ and $0<q<p$.  There is always a presentation of the form
\begin{equation}\label{twobridgepresentation}
 \pi_1(S^3\bs K)=\langle A,B\mid AW=WB \rangle,
\end{equation}
where $A$ and $B$ are meridians of the knot. The word $W$ can be determined explicitly from the rational number, $p/q$, see \cite{Murasugi61} for details. We now wish to examine deformations of $\Gamma=\pi_1(S^3\bs K)$ that give rise to elements of $\lieslact{\rhogeo}$-cuspidal cohomology. Such a deformation must preserve the conjugacy class of both of the meridians of $\Gamma$, and so we can assume that throughout the deformation our meridian matrices, $A$ and $B$, are of the form \eqref{normform2}. Therefore we can find deformations by solving the matrix equation
\begin{equation}\label{twobridgeequation}
 AW-WB=0
\end{equation}
over $\R$. 

\subsection{Figure-eight knot}\label{fig8section}

The figure-eight knot has rational number $5/3$, and so the word $W=BA^{-1}B^{-1}A$. Solving \eqref{twobridgeequation} we find that 
\begin{equation}\label{fig8solution1}
A=\begin{pmatrix}
1 & 0 & 1 & \frac{3-t}{t-2}\\
0 & 1 & 1 & \frac{1}{(t-2)}\\
0 & 0 & 1 & \frac{t}{2(t-2)}\\
0 & 0 & 0 & 1
\end{pmatrix} \text{ and }
B=\begin{pmatrix}
1 & 0 & 0 & 0\\
t & 1 & 0 & 0\\
2 & 1 & 1 & 0\\
1 & 1 & 0 & 1
\end{pmatrix}
\end{equation}
 and so we have found a 1 parameter family of deformations. However, this family does not preserve the conjugacy class of any non-meridional peripheral element. A longitude of the figure-eight is given by $L=BA^{-1}B^{-1}A^2B^{-1}A^{-1}B$, and a simple computation shows that if \eqref{fig8solution1} is satisfied then $\tr(L)= \frac{48+(t-2)^4}{8(t-2)}$. This curve of representations corresponds to the cohomology classes guaranteed by Theorem \ref{homologyimage}, where $\gamma_1$ can be taken to be the longitude. If we want the entire peripheral subgroup to be parabolic we must add the equation $\tr(L)=4$ to \eqref{twobridgeequation}. When we solve this new set of equations over $\R$, we find that the only solution occurs when $t=4$. 
  Along with Corollary \ref{scdef} this concludes the proof of Theorem \ref{rigiditytheorem} for the figure-eight knot.

\subsection{The Whitehead Link}\label{whiteheadsection}
  
The Whitehead link has rational number $8/3$, and so the word $W=BAB^{-1}A^{-1}B^{-1}AB$. If we again solve \eqref{twobridgeequation} we see that 
\begin{equation}\label{whiteheadsolution}
A=\begin{pmatrix}
1 & 0 & 1 & 0\\
0 & 1 & 1 & -2\\
0 & 0 & 1 & 2\\
0 & 0 & 0 & 1
\end{pmatrix} \text{ and }
B=\begin{pmatrix}
1 & 0 & 0 & 0\\
4 & 1 & 0 & 0\\
-1 & 1 & 0 & 0\\
1 & 1 & 0 & 1
\end{pmatrix}
\end{equation}
 and we find that the Whitehead link has no deformations preserving the conjugacy classes of the boundary elements near the geometric representation. Applying Corollary \ref{scdef} concludes the proof of Theorem \ref{rigiditytheorem} for the Whitehead link. 
 
 This time it was not necessary to place any restriction on the trace of a longitude in order to get a unique solution. Theorem \ref{homologyimage} tells us that there are still 2 dimensions of infinitesimal deformations of $\Gamma$ that we have not yet accounted for. However, we can find deformations that give rise to these extra cohomology classes. To see this, notice that there is a totally geodesic surface that intersects one component of the Whitehead link in a longitude that we can bend along. With reference to Figure \ref{whiteheadfig}, we denote the totally geodesic surface $S$, and the two cusps of the Whitehead link by $C_1$ and $C_2$. If we bend along this surface the effect on $C_1$ will be to deform the meridian (or any non-longitudinal, peripheral curve) and leave the longitude 
fixed. This bending has no effect on $C_2$, since $C_2$ intersects $S$ in two oppositely oriented copies of the meridian, and the 
bending along these two meridians cancel one another (see Theorem \ref{bendingperipheral}). This picture is symmetric and so there is another totally geodesic surface bounding the longitude of $C_2$ and the same argument shows that we can find another family of deformations. 

\begin{figure}\label{whiteheadfig}
  \begin{center}
  \includegraphics[scale=.4]{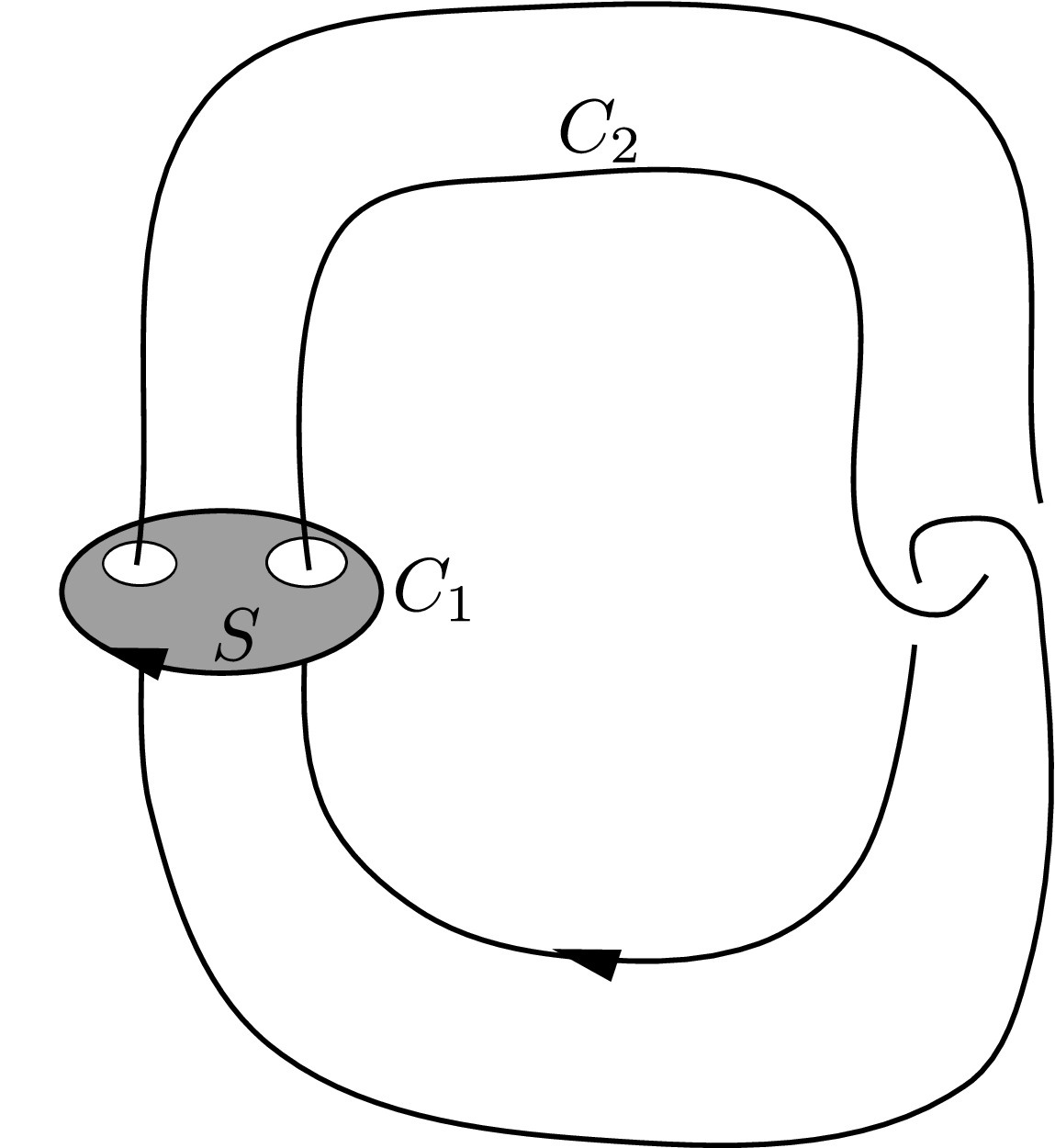}
  \caption{The Whitehead link and its totally geodesic, thrice punctured sphere.}
  \end{center} 
 \end{figure}

\begin{remark}
 Similar computations have been done for the two-bridge knots $5_2$ and $6_1$. However in these two examples it is not known if the cohomology classes coming from Theorem \ref{homologyimage} are integrable. The details of these computations can be found in \cite{Ballas12} and serve to complete the proof of Theorem \ref{rigiditytheorem}.
\end{remark}

\section{Rigidity and Flexibility After Surgery}\label{rigflex} 

In this section we examine the relationship between deformations of a cusped hyperbolic manifold and deformations of manifolds resulting from surgery. The overall idea is as follows: suppose that $M$ is a 1-cusped hyperbolic 3-manifold of finite volume, $\alpha$ is a slope on $\partial M$, and $M(\alpha)$ is the manifold resulting from surgery along $\alpha$. Let $M(\alpha)$ be hyperbolic with geometric representation $\rhogeo$ and $\rho_t$ is a non-trivial family of deformations of $\rhogeo$ into $\pgl$. Since $\pi_1(M(\alpha))$ is a quotient of $\pi_1(M)$ we can pull $\rho_t$ back to a non-trivial family of representations, $\tilde \rho_t$, such that $\tilde\rho_t(\alpha)=1$ for all $t$ (it is important to remember that $\tilde\rho_0$ is not the geometric representation for $M$). In terms of cohomology, we find that the image of the element $\omega\in\homol{M}{\lieslact{\tilde\rho_0}}$ corresponding to $\tilde \rho_t$ has trivial image in $\homol{\alpha}{\lieslact{\tilde\rho_0}}$. With this in mind we will 
call 
slope $\alpha$-\emph{rigid} if the map $\homol{M}{\vact{\rhogeo}}\to \homol{\alpha}{\vact{\rhogeo}}$ is \emph{non-trivial}. Roughly the idea is that if a slope is rigid then we can find deformations that do not infinitesimally fix $\alpha$. The calculations from section \ref{whiteheadsection} show that either meridian of the Whitehead link is a rigid slope.

\subsection{The cohomology of $\partial M$}

Before proceeding we need to understand the structure of the cohomology of the boundary. For details of the facts in this section see \cite[Section 5]{PortiHeusener09}. Let $T$ be a component of $\partial M$ and let $\gamma_1$ and $\gamma_2$ be generators of $\fund{T}$, then $\rhogeo(\gamma_1)$ and $\rhogeo(\gamma_2)$ are both parabolic and can thus be realized as Euclidean translations (along a horosphere). These two translation are determined by vectors $v_{\gamma_1},v_{\gamma_2}\in \R^2$. When the Euclidean angle between $v_{\gamma_1}$ and $v_{\gamma_2}$ is not an integral multiple of $\pi/3$, then we have an injection,
\begin{equation}\label{boundarystructure1}
\homol{T}{\vact{\rho_0}}\stackrel{\iota^\ast_{\gamma_1}\oplus\iota^\ast_{\gamma_2}}{\to}\homol{\gamma_1}{\vact{\rho_0}}\oplus\homol{\gamma_2}{\vact{\rho_0}},
\end{equation}
where $\iota^\ast_{\gamma_i}$ is the map induced on cohomology by the inclusion $\gamma_i\hookrightarrow T$. Additionally, if $\rho_u$ is the holonomy of an incomplete hyperbolic structure, then 
\begin{equation}\label{boundarystructure2}
 \homol[\ast]{\partial M}{\vact{\rho_u}}\cong \homol[\ast]{\partial M}{\R}\otimes\mathfrak{v}^{\rho_u(\fund{\partial M})},
\end{equation}
where $\mathfrak{v}^{\rho_u(\fund{\partial M})}$ are the elements of $\mathfrak{v}$ invariant under $\rho_u(\fund{\partial M}).$ Additionally, for all such representations the image of $\homol{M}{\vact{\rho_u}}$ under $\iota^\ast$ in $\homol{\partial M}{\vact{\rho_u}}$ is 1-dimensional. 

\subsection{Deformations coming from symmetries}

From the previous section we know that the map $\homol{M}{\vact{\rho}}\to\homol{\partial M}{\vact{\rho}}$ has rank 1 whenever $\rho$ is the holonomy of an incomplete hyperbolic structure, and we would like to know how its image sits inside $\homol{\partial M}{\vact{\rho}}$. Additionally, we would like to know when infinitesimal deformations coming from cohomology classes can be integrated to actual deformations. Certain symmetries of $M$ can help us to answer this question. 

Suppose that $M$ is the complement of a hyperbolic amphicheiral knot complement, then $M$ admits an orientation reversing symmetry, $\phi$, that sends the longitude to itself and the meridian to its inverse. The existence of such a symmetry places strong restrictions on the shape of the cusp. Since $\rho$ is the holonomy of a hyperbolic structure, it will factor through a representation into $\slc$ and we wish to examine the image of a peripheral subgroup inside this group. Let $m$ and $l$ be the meridian and longitude of $M$, then it is always possible to conjugate in $\slc$ so that 
$$\rho(m)=
\begin{pmatrix}
 e^{a/2} & 1\\
 0 & e^{-a/2}
\end{pmatrix}\text{ and } 
\rho(l)=
\begin{pmatrix}
 e^{b/2} & \tau_{\rho}\\
 0 & e^{-b/2}
\end{pmatrix}
$$    
The value $\tau_\rho$ is easily seen to be an invariant of the conjugacy class of $\rho$ and we will henceforth refer to it as the \emph{$\tau$ invariant} (see also \cite[App B]{BoileauPorti01}). When $\rho$ is the geometric representation this coincides with the cusp shape. 

Suppose that $[\rho]$ is a representation such that the metric completion of $\HH^3/\rho(\fund{M})$ is the cone manifold $M(\alpha/0)$  (resp.\ $M(0/\alpha))$, where $M(\alpha/0)$  (resp.\ $M(0/\alpha))$ is the cone manifolds obtained by Dehn filling along the meridian (resp.\ longitude) with a solid torus with singular longitude of cone angle $2\pi/\alpha$. Using the $\tau$ invariant we can show that the holonomy of the singular locus of these cone manifolds is a pure translation. 

\begin{lemma}\label{puretranslation}
 Let $M$ be a hyperbolic amphicheiral knot complement and let $\alpha\geq 2$. If $M(\alpha/0)$ is hyperbolic then the holonomy of the longitude is a pure translation. Similarly, if $M(0/\alpha)$ is hyperbolic then the holonomy around the meridian is a pure translation.
\end{lemma}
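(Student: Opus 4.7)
The plan is to exploit the amphicheiral involution $\phi\colon M \to M$, which is orientation-reversing and satisfies $\phi_*(m)=m^{-1}$ and $\phi_*(l)=l$. Since $\phi$ preserves both peripheral slopes set-theoretically, it extends to a self-homeomorphism of each orbifold $M(\alpha/0)$ and $M(0/\alpha)$. I would argue via the orbifold version of Mostow--Prasad rigidity that, whenever one of these orbifolds is hyperbolic, $\phi$ is isotopic to an orientation-reversing isometry of its cone structure. Because orientation-reversing isometries of $\HH^3$ act on $\partial\HH^3$ as antiholomorphic M\"obius maps, such an isometry translates into the relation
\[
\rho\circ\phi_* = \Ad{g}\circ\bar\rho
\]
on the holonomy $\rho\colon \pi_1(M)\to \slc$, for some $g\in \slc$, where $\bar\rho$ denotes entrywise complex conjugation. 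Taking traces yields $\tr\,\rho(\phi_*(\gamma)) = \overline{\tr\,\rho(\gamma)}$ for every $\gamma\in \pi_1(M)$.

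For $M(\alpha/0)$, apply this identity with $\gamma = l$: since $\phi_*(l)=l$, one concludes that $\tr\,\rho(l)\in\R$. Now $\rho(m)$ is elliptic with rotation angle $2\pi/\alpha$, and $\rho(l)$ commutes with $\rho(m)$, hence preserves its axis and has complex translation length $b = b'+i\theta$, where $b'$ is the hyperbolic translation length of the singular core and $\theta$ is its rotational part. Then
\[
\tr\,\rho(l) = 2\cosh(b/2) = 2\cosh(b'/2)\cos(\theta/2)+ 2i\sinh(b'/2)\sin(\theta/2),
\]
so reality of the trace forces $\sinh(b'/2)\sin(\theta/2)=0$. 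The hypothesis that $M(\alpha/0)$ is hyperbolic rules out a degenerate cone structure: the singular core has positive length, so $b'\neq 0$. Therefore $\sin(\theta/2)=0$, i.e.\ $\theta\in 2\pi\Z$; in $\slc$ this is exactly the statement that $\rho(l)$ is a pure translation.

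For $M(0/\alpha)$ the roles of $m$ and $l$ swap. Although $\phi_*(m)=m^{-1}$ rather than $m$, the trace is invariant under inversion, so the same chain of equalities still gives $\tr\,\rho(m)\in\R$. Here $\rho(l)$ is the elliptic cone holonomy of angle $2\pi/\alpha$, and $\rho(m)$ is loxodromic with the same axis; the identical trace decomposition together with positivity of the length of the meridional core forces the rotational part of $\rho(m)$ to vanish.

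I expect the main obstacle to be the very first step, namely promoting the topological amphicheiral symmetry to an orientation-reversing isometry of each cone orbifold, so that the relation $\rho\circ\phi_*\sim\bar\rho$ holds on the nose. For closed, finite-volume hyperbolic 3-orbifolds this follows from Mostow--Prasad rigidity, and one only needs to verify that the extension of $\phi$ across the filled-in solid torus remains orientation-reversing; this is immediate from the fact that $\phi$ inverts exactly one of the two generators of the peripheral $\Z^2$. Once this is in place, the remainder of the argument is an elementary trace calculation.
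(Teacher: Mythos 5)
Your argument is correct, and its first (and hardest) step is exactly the one the paper takes: both proofs promote the amphicheiral symmetry $\phi$ to an anti-holomorphic symmetry of the holonomy, $\rho\circ\phi_* = \Ad{A_\phi}\circ\bar\rho$, by invoking rigidity of the hyperbolic structure on the filled object. One caution there: for general real $\alpha\geq 2$ these are cone manifolds, not orbifolds, so ``orbifold Mostow--Prasad rigidity'' is not quite the right citation; the paper appeals to cone-manifold rigidity (Kojima, Hodgson--Kerckhoff), which applies precisely because $\alpha\geq 2$ forces the cone angle $2\pi/\alpha\leq\pi$. Where you genuinely diverge is in the endgame. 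You take traces of the symmetry relation at the unfilled slope, getting $\tr\,\rho(l)\in\R$ directly from $\phi_*(l)=l$, and then rule out a nonzero rotational part by writing $\tr\,\rho(l)=2\cosh((b'+i\theta)/2)$ and using the geometric input that the singular core is a closed geodesic of positive length ($b'\neq 0$). The paper instead introduces the $\tau$ invariant (the off-diagonal entry of $\rho(l)$ after normalizing $\rho(m)$), shows $\tau_\rho$ is purely imaginary from the two identities $\tau_{\rho\circ\phi}=\overline{\tau_\rho}$ and $\tau_{\rho\circ\phi}=-\tau_\rho$, and then uses the commutation identity $\tau_\rho^2(\tr^2\rho(m)-4)+4=\tr^2\rho(l)$ together with $\tr^2\rho(m)\in(0,4)$ to conclude $\tr^2\rho(l)>4$. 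Your route is shorter and avoids the $\tau$ invariant entirely, at the cost of assuming positivity of the core length; the paper's route is purely algebraic and gets $|\tr\,\rho(l)|>2$ for free, with no nondegeneracy hypothesis on the core. Both are valid, and your handling of the $M(0/\alpha)$ case via invariance of trace under inversion is fine.
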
 

\begin{proof}
We prove the result for $M(\alpha/0)$. Provided that $M(\alpha/0)$ is hyperbolic and $\alpha\geq 2$, rigidity results for cone manifolds from \cite{Kojima98,HodgsonKerckhoff98} provides the existence of an element $A_\phi\in \rm{PSL}_2(\C)$ such that $\rho(\phi(\gamma))=\overline{A_\phi\rho(\gamma) A^{-1}_\phi}$, where $\overline{\gamma}$ is complex conjugation of the entries of the matrix $\gamma$. Thus we see that $\tau_{\rho\circ \phi}$ is $\overline{\tau_\rho}$. On the other hand $\phi$ preserves $l$ and sends $m$ to its inverse and so we see that the $\tau_{\rho\circ\phi}$ is also equal to $-\tau_\rho$, and so we see that $\tau_{\rho}$ is purely imaginary.   

The fact that $\rho(m)$ and $\rho(l)$ commute gives the following relationship between $a,b$, and $\tau_\rho$: 
\begin{equation}\label{sinhequation}
 \tau_{\rho} \sinh(a/2)=\sinh(b/2).
\end{equation} 
Using the fact that $\tr(\rho(m))/2=\cosh(a/2)$ and the analogous relationship for $\tr(\rho(l))$ we can rewrite \eqref{sinhequation} as 
\begin{equation}\label{traceequation}
 \tau_{\rho}^2(\tr^2(\rho(m))-4)+4=\tr^2(\rho(l)).
\end{equation}
  For the cone manifold $M(\alpha/0)$, $\rho(m)$ is elliptic and so $\tr^2(\rho(m))\in(0,4)$. As a result we see that $\tr^2(\rho(l))\in(4,\infty)$, and as a result $\rho(l)$ is a pure translation. The proof for $M(0/\alpha)$ is identical with the roles of $m$ and $l$ being exchanged. 
\end{proof}

With this in mind we can prove the following generalization of \cite[Lemma 8.2]{PortiHeusener09}, which lets us know that $\phi$ induces maps on cohomology that act as we would expect. 

\begin{lemma}\label{cohomologyaction}
 Let $M$ be the complement of a hyperbolic, amphicheiral knot with geometric representation $\rhogeo$, and let $\rho_u$ be the holonomy of an incomplete, hyperbolic structure whose completion is $M(\alpha/0)$ or $M(0/\alpha)$, where $\alpha\geq 2$, then 
\begin{enumerate}
 \item $\homol[\ast]{\partial M}{\vact{\rho_u}}\cong \homol[\ast]{\partial M}{\R}\otimes \mathfrak{v}^{\rho_{u}(\fund{\partial M})}$ and $\phi^\ast_u=\phi^\ast\otimes \rm{Id}$, where $\phi_u^\ast$ is the map induced by $\phi$ on $\homol[\ast]{\partial M}{\vact{\rho_u}}$ and $\phi^\ast$ is the map induced by $\phi$ on $\homol[\ast]{\partial M}{\R}$, and 
\item $\iota^\ast_l\circ \phi^\ast_0=\iota^\ast_l$ and $\iota^\ast_m\circ\phi^\ast_0=-\iota^\ast_m$, where $\phi_0^\ast$ is the map induced by $\phi$ on $\homol{\partial M}{\vact{\rhogeo}}$. 
\end{enumerate}

\end{lemma}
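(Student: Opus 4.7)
The plan is to prove part (1) by decomposing $\phi^\ast_u$ along the tensor factorization \eqref{boundarystructure2}, and then to deduce part (2) by combining part (1) with an elementary computation of $\phi^\ast$ on $\homol{\partial M}{\R}$.

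For part (1), the isomorphism $\homol[\ast]{\partial M}{\vact{\rho_u}}\cong\homol[\ast]{\partial M}{\R}\otimes\mathfrak{v}^{\rho_u(\fund{\partial M})}$ is \eqref{boundarystructure2}, and reflects the splitting of the $\rho_u$-twisted cochain complex on $\partial M$ into an untwisted factor tensored with the invariants, which is available because $\fund{\partial M}\cong\Z^2$ is abelian. Under this splitting, a self-diffeomorphism of the cusp induces the tensor of its topological pullback with a conjugation action on the coefficients; here the conjugation is by the intertwiner $A_\phi\in\slc$ produced in the proof of Lemma \ref{puretranslation}, which satisfies $\rho_u(\phi_\ast\gamma)=\overline{A_\phi\rho_u(\gamma)A_\phi^{-1}}$. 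Part (1) therefore reduces to the claim that $\Ad(A_\phi)$ acts trivially on the invariant subspace $\mathfrak{v}^{\rho_u(\fund{\partial M})}$.

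To establish this triviality I would use Lemma \ref{puretranslation} to place $\rho_u(\fund{\partial M})$ in a normal form inside the diagonal Cartan of $\slc$, with one generator a real diagonal matrix (the pure translation) and the other a diagonal matrix with unit-modulus entries (the cone axis rotation). Combined with $\phi_\ast(l)=l$ and $\phi_\ast(m)=m^{-1}$, the intertwining relation pins $A_\phi$ down to a specific element of the normalizer of this Cartan. A short case analysis, treating $M(\alpha/0)$ and $M(0/\alpha)$ separately, then verifies that in each case $\Ad(A_\phi)$ fixes the invariant subspace $\mathfrak{v}^{\rho_u(\fund{\partial M})}\subset\mathfrak{v}\subset\liesl$ pointwise, completing part (1).

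For part (2), rerun the argument of part (1) with $\rho_u=\rhogeo$; the existence of $A_\phi$ and the abelianness of the peripheral image still hold (with $A_\phi$ obtainable from Mostow rigidity and the fact that the amphicheiral cusp shape is purely imaginary), and the verification of the triviality of $\Ad(A_\phi)$ on the relevant invariants is analogous. This yields $\phi_0^\ast=\phi^\ast\otimes\text{Id}$. Now $\phi$ acts on $\fund{\partial M}$ by $m\mapsto m^{-1}$ and $l\mapsto l$, so on the basis $m^\ast,l^\ast$ of $\homol{\partial M}{\R}$ dual to $[m],[l]$ it acts as $\phi^\ast(m^\ast)=-m^\ast$ and $\phi^\ast(l^\ast)=l^\ast$. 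Restriction to $l$ kills $m^\ast$ and preserves $l^\ast$, so $\iota_l^\ast\circ\phi_0^\ast=\iota_l^\ast$; exchanging the roles of $m$ and $l$ gives $\iota_m^\ast\circ\phi_0^\ast=-\iota_m^\ast$. The main obstacle throughout is the case-by-case verification in part (1): even with the strong structural control provided by Lemma \ref{puretranslation}, one still has to track carefully how the complex conjugation in the intertwining relation and the orientation-reversing nature of $\phi$ combine to act on the specific real subspace $\mathfrak{v}\subset\liesl$ of $J$-symmetric matrices in each of the two surgery cases.
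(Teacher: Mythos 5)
Your outline for part (1) is essentially the paper's: the paper also invokes \eqref{boundarystructure2}, uses Lemma \ref{puretranslation} to put $\rho_u(m)$ (elliptic) and $\rho_u(l)$ (pure translation) into normal form, and then defers the remaining verification to Porti--Heusener; your reduction to showing that the coefficient automorphism induced by $A_\phi$ (conjugation composed with complex conjugation) is trivial on $\mathfrak{v}^{\rho_u(\fund{\partial M})}$ is the right thing to check there.

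The gap is in part (2). You propose to ``rerun the argument of part (1) with $\rho_u=\rhogeo$'' and conclude $\phi_0^\ast=\phi^\ast\otimes \mathrm{Id}$, then read off the signs from the action of $\phi$ on $\homol{\partial M}{\R}$. But the tensor decomposition \eqref{boundarystructure2} is stated (and is only valid) for the holonomy of an \emph{incomplete} structure; at the complete structure the peripheral holonomy is parabolic and the decomposition fails. Concretely, the generators of $\homol{\partial M}{\vact{\rhogeo}}$ are the cocycles $z_m$ (with $z_m(m)=a_l$, $z_m(l)=0$) and $z_l$ (with $z_l(l)=a_m$, $z_l(m)=0$), where $a_l$ commutes with $\rhogeo(l)$ but \emph{not} with $\rhogeo(m)$, and symmetrically for $a_m$; neither lies in $\mathfrak{v}^{\rhogeo(\fund{\partial M})}$, so these classes are not of the form $c\otimes v$ with $v$ invariant, and there is no single invariant vector serving as a tensor factor. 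Hence $\phi_0^\ast=\phi^\ast\otimes\mathrm{Id}$ is not meaningful as stated and the sign computation does not follow formally. The paper instead argues directly at $\rhogeo$: it uses Mostow rigidity to produce an explicit intertwiner $A_0$ (a parabolic $T$ composed with $\mathrm{diag}(1,-1,1,1)$), evaluates $\phi_0^\ast z_m(m)=-A_0^{-1}\cdot\rhogeo(m^{-1})\cdot a_l$, and then identifies the cohomology class of $\iota_m^\ast\phi_0^\ast z_m$ via the non-degenerate cup product pairing $B(a,b)=8\tr(ab)$ against $a_m\in H^0(m,\vact{\rhogeo})$, obtaining $B(\phi_0^\ast z_m(m),a_m)=32=-B(a_l,a_m)$ and concluding equality inside a one-dimensional space; the statement for $\iota_l^\ast$ is obtained the same way. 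Some explicit computation of this kind (or another argument that does not presuppose the tensor splitting at $\rhogeo$) is needed to close your part (2).
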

\begin{proof}
 The proof that $\homol[\ast]{\partial M}{\vact{\rho_u}}\cong \homol[\ast]{\partial M}{\R}\otimes \mathfrak{v}^{\rho_{u}(\fund{\partial M})}$ is found in \cite[Lem.\ 5.3]{PortiHeusener09}. For the first part we will prove the result for $M(\alpha/0)$ (the other case can be treated identically). By Lemma \ref{puretranslation} we know that $\rho_u(m)$ is elliptic and that $\rho_u(l)$ is a pure translation. Once we have made this observation our proof is identical to the proof given in \cite{PortiHeusener09}. 

For the second part, we begin by observing that by Mostow rigidity there is a matrix $A_0\in \rm{PO}(3,1)$ such that $\rho_0(\phi(\gamma))=A_0\cdot \rho_0(\gamma)$, where the action here is by conjugation. The fact that our knot is amphicheiral tells us that the cusp shape of $M$ is imaginary, and so in $PSL_2(\C)$ we can assume that 
$$\rho_0(m)=
\begin{pmatrix}
 1 & 1\\
0 & 1
\end{pmatrix} {\rm\ and\ }
\rho_0(l)=
\begin{pmatrix}
 1 & i c\\
 0 & 1
\end{pmatrix}
$$
where $c$ is a positive real number. Under the standard embedding of $\rm{PSL}_2(\C)$ into $\pgl$ (as the copy of $\so{3}$ that preserves the form $x_1^2+x^2_2+x_3^2-x_4^2$) we see that 
$$\rho_0(m)={\rm exp}
\begin{pmatrix}
 0 & 0 & 0 & 0\\
 0 & 0 & -1 & 1\\
 0 & 1 & 0 & 0 \\
 0 & 1 & 0 & 0
\end{pmatrix} {\rm\ and\ }
\rho_0(l)={\rm exp}
\begin{pmatrix}
 0 & 0 & -c & c\\
 0 & 0 & 0 & 0\\
 c & 0 & 0 & 0\\
 c & 0 & 0 & 0
\end{pmatrix}
$$
Thus we see that 
$A_0=T
\begin{pmatrix} 
1 & 0 & 0 & 0\\
0 & -1 & 0 & 0 \\
0 & 0 & 1 & 0\\
0 & 0 & 0 & 1
\end{pmatrix},$ where $T$ is some parabolic isometry that fixing the vector $(0,0,1,1)$, which corresponds to $\infty$ in the upper half space model of $\HH^3$. Such a $T$ will be of the form $$T=\rm{exp}
\begin{pmatrix}
 0 & 0 & -a & a\\
 0 & 0 &  -b & b\\
 a & b & 0 & 0 \\
 a & b & 0 & 0
\end{pmatrix}
$$
where $a$ and $b$ are the real and imaginary parts of the complex number that determines the parabolic translation, $T$. Since the cusp shape is imaginary the angle between $m$ and $l$ is $\pi/2$ and we know from \cite[Lemma 5.5]{PortiHeusener09} that the cohomology classes given by the cocycles $z_m$ and $z_l$ generate $H^1(\partial M,\vact{\rho_0})$. Here $z_m$ is given by $z_m(l)=0$ and $z_m(m)=a_l$, where 
$$a_l=\begin{pmatrix}
-1 & 0 & 0 & 0 \\
0 & 3 & 0 & 0 \\
0 & 0 & -1 & 0\\
0 & 0 & 0 & -1
\end{pmatrix}$$
and $z_l$ given by $z_l(m)=0$ and $z_l(l)=a_m$, where 
$$a_m=\begin{pmatrix}
         3 & 0 & 0 & 0 \\
	 0 & -1 & 0 & 0 \\
	 0 & 0 & -1 & 0 \\
	 0 & 0 & 0 & -1
        \end{pmatrix}
$$  
Observe that $\phi_0^\ast z_l(m)=0=z_l(m)$ and that 
$$
 \phi_0^\ast z_m(m)  =A_0^{-1}\cdot z_m(m^{-1}) = -A_0^{-1}\cdot\rho_0(m^{-1})\cdot a_l
.$$
In \cite{PortiHeusener09} it is shown how the cup product associated to the Killing form on $\mathfrak{v}$ gives rise to a non-degenerate pairing. We can now use this cup product to see that $\iota_m^\ast \phi^\ast_0 z_m(m)$ and $-\iota_m^\ast z_m(m)$ are cohomologous. The cup product yields a map $H^1(m,\vact{\rho_0})\otimes H^0(m,\vact{\rho_0})\to H^1(m,\R)\cong \R$ given by $(a\cup b)(m)=B(a(m),b):=8\tr\left(a(m)b\right)$ (where we are thinking of $H^1(m,\R)$ as homomorphisms from $\Z$ to $\R$). Observe that 
$$(\iota_m^\ast \phi^\ast_0z_m\cup a_m)(m)=B(\phi_0^\ast z_m(m),a_m)=32=-B(a_l,a_m)=-(\iota^\ast_m z_m\cup a_m)(m).$$
Since this pairing is non-degenerate and $\left[\iota_m^\ast \phi^\ast_0 z_m\right]$ and $\left[-\iota^\ast_m z_m \right]$ both live in the same 1 dimensional vector space, we see that they must be equal. Thus we see that $\iota^\ast_m\circ\phi^\ast_0=-\iota^\ast_m$. After observing that 
$$(\iota^\ast_l\phi^\ast_0 z_l\cup a_l)(l)=B(\phi^\ast_0 z_l(l),a_l)=-32=B(a_m,a_l)=(\iota_l^\ast z_l\cup l_l)(l),$$
a similar argument shows that $\iota^\ast_l\circ \phi^\ast_0 =\iota^\ast_l$.

\end{proof}

This lemma immediately helps us to answer the question of how the image of $\homol{M}{\vact{\rho_0}}$ sits inside of $\homol{\partial M}{\vact{\rho_0}}$. Since $\phi$ maps $\partial M$ to itself, we see that the image of $\homol{M}{\vact{\rho_0}}$ is invariant under the involution $\phi^\ast$, and so the image will be either the $\pm 1$ eigenspace of $\phi^\ast$. In light of Lemma \ref{cohomologyaction} we see that these eigenspaces sit inside  $\homol{l}{\vact{\rho_0}}$ and $\homol{m}{\vact{\rho_0}}$, respectively. Under the hypotheses of Theorem \ref{achiraldef} the previous fact is enough to show that certain infinitesimal deformations are integrable.

\begin{proof}[Proof of Theorem \ref{achiraldef}]
 In this proof $O=M(n/0)$ and $N$ will denote a regular neighborhood of the singular locus of $M(n/0)$. Hence we can realize $O$ as $M\sqcup N$. By using a Taylor expansion we see that given a smooth family of representations $\sigma_t$ of $\fund[orb]{O}$ into ${\rm SL}_4(\R)$, it is possible to write
\begin{equation}\label{taylorexpansion}\sigma_t(\gamma)=(I+u_1(\gamma)t+u_2(\gamma)t^2+\ldots)\sigma_0(\gamma),
\end{equation}

where the $u_k$ are 1-cochains in $C^1(\fund[orb]{O},\mathfrak{gl}(4)_{\sigma_{0}})$. The fact that each $\sigma_t$ is a homomorphism is satisfied if and only if for each $k$
\begin{equation}\label{integrabilitycondition}
d u_k+\sum_{i=1}^{k-1}u_i\cup u_{k-i}=0, 
\end{equation}
where $a\cup b$ is the 2-cochain given by $(a\cup b)(c,d)=a(c)c\cdot b(d)$, and the action is by conjugation. Conversely, if we are given a cocycle $u_1$ and a collection $\{u_k\}_{k=2}^{\infty}$ such that \eqref{integrabilitycondition} is satisfied for each $k$, then we can apply a deep theorem of Artin \cite[Thm 1.2]{Artin68} to show that we can find an actual deformation $\tilde \sigma_t$ in $\text{GL}_{4}(\R)$ such that the resulting infinitesimal deformation is equal to $u_1$ (here we are using the natural embedding of $\mathfrak{sl}(4)$ into $\mathfrak{gl}(4)\cong\R\oplus \mathfrak{sl}(4)$). We can project this curve to $\pgl$ to find a new curve $\sigma_t$ with infinitesimal deformation $u_1$. In this way we will construct deformations at the level of representations by finding a sequence of 1-cochains satisfying \eqref{integrabilitycondition}.

We first claim that we may assume that for $i=1,2$ that $\homol[i]{O}{\mathfrak{gl}(4)_{\rho_n}}\cong\homol[i]{O}{\vact{\rho_n}}$, where $\rho_n$ is the holonomy of the incomplete structure whose completion is the orbifold $M(n/0)$. Using the splitting of $\mathfrak{gl}(4)\cong\R\oplus\liesl[4]$ we get a splitting of $\homol[i]{O}{\mathfrak{gl}(4)_{\rho_n}}\cong\homol[i]{O}{\R}\oplus\homol[i]{O}{\lieslact{\rho_n}}$. The group $\fund[orb]{O}$ has finite Abelianization and so we see that $\homol{O}{\R}=0$. By duality we see that $\homol[2]{O}{\R}=0$ as well.  Using Weil rigidity and the splitting \eqref{splitting} we see that $\homol{M(n/0)}{\lieslact{\rho_n}}\cong \homol{M(n/0)}{\vact{\rho_n}}$. Finally, duality tells us that $\homol[2]{O}{\lieslact{\rho_n}}\cong \homol[2]{O}{\vact{\rho_n}}$.

In order to simplify notation $\homol[\ast]{\underline{\ \ }}{\mathfrak{gl}(4)_{\rho_n}}$ will be denoted $H^\ast(\underline{\ \ })$. The orbifold $O$ is finitely covered by an aspherical manifold, and so by combining a 
transfer argument \cite{PortiHeusener09} with the fact that CW-cohomology with twisted coefficients and group cohomology with twisted coefficients coincide for aspherical manifolds \cite{Whitehead78}, we can conclude that group cohomology with twisted coefficients for $\fund[orb]{O}$ is the same as twisted CW-cohomology with twisted coefficients for the orbifold, $O$. Therefore, we can use a Mayer-Vietoris sequence to analyze cohomology. Consider the following section of the sequence.

\begin{equation}\label{exactsequence1}
H^0(M)\oplus H^0(N)\to H^0(\partial M)\to  H^1(O)\to H^1(M)\oplus H^1(N)\to H^1(\partial M).
\end{equation}

Next, we will determine the cohomology of $N$. Since $N$ has the homotopy type of $S^1$ it will only have cohomology in dimension 0 and 1. Since $\rho_n(\partial M)=\rho_n(N)$ we see that $H^0(N)\cong H^0(\partial M)$ (both are 1-dimensional). Finally by duality we see that $H^1(N)$ is also 1-dimensional. Additionally, $H^0(O)$ is trivial since $\rho_n$ is irreducible. Combining these facts, we see that the first arrow of \eqref{exactsequence1} is an isomorphism and thus  the penultimate arrow of \eqref{exactsequence1} is injective. We also learn that $H^1(O)$ injects into $H^1(M)$, since if a cohomology class from $H^1(O)$ dies in $H^1(M)$ then exactness tells us that it must also die when mapped into $H^1(N)$ (since $H^1(N)$ injects into $H^1(\partial M)$). However, this contradicts the fact that $H^1(O)$ injects into $H^1(M)\oplus H^1(N)$. Using the fact that the longitude is a rigid slope, Lemma \ref{cohomologyaction}, and \cite[Cor 6.6]{PortiHeusener09} we see that $\phi^\ast$ acts as the identity on 
$H^1(O)$. Since $H^1(M)$ 
and $H^1(N)$ have the 1-eigenspace of $\phi^\ast$ as their image in $H^1(\partial M)$, the last arrow of \eqref{exactsequence1} is not a surjection, and so $H^1(O)$ is 1-dimensional.

Duality tells us that $H^2(O)$ is 1-dimensional and we will now show that $\phi^\ast$ act as multiplication by -1. Again, by duality we see that $H^3(O)=0$ and so the Mayer-Vietoris sequence contains the following piece.

\begin{equation}\label{exactsequence2}
H^1(M)\oplus H^1(N) \to H^1(\partial M)\to H^2(O)\to H^2(M)\oplus H^2(N)\to H^2(\partial M)\to 0.
\end{equation}

Since $H^2(O)$ is 1-dimensional, the second arrow of \eqref{exactsequence2} is either trivial or surjective. If this arrow is trivial then the third arrow is an injection and thus an isomorphism for dimensional reasons. However, this is a contradiction since the penultimate arrow is a surjection and $H^2(\partial M)$ is non-trivial. Since the first arrow of \eqref{exactsequence2} has the 1-eigenspace of $\phi^\ast$ as its image we see that the -1-eigenspace of $\phi^\ast$ surjects $H^2(O)$. However, since the Mayer-Vietoris sequence is natural and $\phi$ respects the splitting of $O$ into $M \cup N$ we see that $\phi^\ast$ acts on $H^2(O)$ as -1. 

We can now construct the sequence  $\{u_k\}_{k=2}^\infty$ satisfying \eqref{integrabilitycondition}. Let $[u_1]$ be a generator of $H^1(O)$ (we know this is one dimensional by the previous paragraph). Since $\phi$ is an isometry of a finite volume hyperbolic manifold we know that it has finite order when viewed as an element of $\rm{Out}(\fund{M})$ \cite{ThurstonNotes}, and so there exists a finite order map $\psi$ that is conjugate to $\phi$. Because the maps $\phi$ and $\psi$ are conjugate, they act in the same way on cohomology \cite{Brown82}. Let $L$ be the order of $\psi$. Since $\psi$ acts as the identity on $H^1(O)$, the cocycle
$$u_1^\ast=\frac{1}{L}(u_1+\phi(u_1)+\ldots +\phi^{L-1}(u_i))$$
is both invariant under $\psi$ and cohomologous to $u_1$. By replacing $u_1$ with $u_1^\ast$ we can assume that $u_1$ is invariant under $\psi$. Next observe that 
$$-u_1\cup u_1\sim \psi(u_1\cup u_1)=\psi(u_1)\cup\psi(u_1)=u_1\cup u_1,$$
and so we see that $u_1\cup u_1$ is cohomologous to 0, and so there exists a 1-cochain, $u_2$, such that $du_2+u_1\cup u_1 =0$. Using the same averaging trick as before we can replace $u_2$ by the $\psi$-invariant cochain $u_2^\ast$. By invariance of $u_1$ we see that $u_2^\ast$ has the same boundary as $u_2$ so this replacement does not affect the first part of our construction. Again we see that 
$$-(u_1\cup u_2+u_2\cup u_1)\sim \psi(u_1\cup u_2 +u_2\cup u_1)=u_1\cup u_2+u_2\cup u_1,$$
and so there exist $u_3$ such that \eqref{integrabilitycondition} is satisfied. Repeating this process indefinitely, we can find the sequence $\{u_k\}$ satisfying \eqref{integrabilitycondition}, and thus by Artin's theorem \cite[Thm 1.2]{Artin68} we can find our desired deformation. 

Let $\rho_t$ be the family of representations that we have just constructed. By work of Koszul \cite{Koszul68} and Benoist \cite{Benoist05} we see each of these representations will be discrete and faithful. Additionally, for each $t$ we can find a properly convex domain, $\Omega_t$, that is preserved by $\rho_t$ and such that $O\cong \Omega_t/\rho_t(\Gamma)$. Furthermore, the group $\fund[orb]{O}$ is word hyperbolic and thus by work of Benoist \cite{Benoist04I} we see that the $\Omega_t$ are actually strictly convex. We have thus found the desired family of strictly convex deformations of the complete hyperbolic structure on $O$.  
     
\end{proof}

\bibliographystyle{plain}
\bibliography{bibliography}

\end{document}